%
%
%

\documentclass[11pt]{article}

\usepackage{amsfonts}
\usepackage{amssymb,amsmath,amsthm}
\usepackage{latexsym}
\usepackage{fullpage}
\usepackage{hyperref}

\newtheorem{theorem}{Theorem}[section]

\newtheorem{lemma}[theorem]{Lemma}
\newtheorem{cor}[theorem]{Corollary}

\newtheorem{fact}[theorem]{Fact}

\newtheorem{claim}[theorem]{Claim}

\theoremstyle{definition}

\newcounter{tenumerate}

\renewcommand{\epsilon}{\varepsilon}

\newcommand{\Lip}{\mathrm{Lip}}

\newcommand{\dist}{\mathsf{dist}}
\renewcommand{\dim}{\mathsf{dim}}

\newcommand{\grad}{\nabla}

\newcommand{\remove}[1]{}

\renewcommand{\leq}{\leqslant}
\renewcommand{\geq}{\geqslant}
\newcommand{\diam}{\mathsf{diam}}
\newcommand{\f}{\varphi}

\newcommand{\laplace}{\Delta}

\begin{document}

\title{{\bf Eigenvalue multiplicity and volume growth}}

\author{James R. Lee\thanks{Research supported by NSF CCF-0644037.} \\ {\small University of Washington}
\and Yury Makarychev \\ {\small Microsoft Research}}
\date{}

\maketitle

\begin{abstract}
Let $G$ be a finite group with symmetric generating set $S$, and let
$c = \max_{R > 0} \frac{|B(2R)|}{|B(R)|}$ be the
doubling constant of the corresponding Cayley graph, where $B(R)$ denotes an $R$-ball
in the word-metric
with respect to $S$.
We show
that the multiplicity of the $k$th eigenvalue of the Laplacian on the Cayley
graph of $G$
is bounded by a function of only $c$ and $k$.  More specifically,
the multiplicity is at most $\exp\left(O(\log c)(\log c + \log k)\right)$.

Similarly, if $X$ is a compact, $n$-dimensional Riemannian manifold
with non-negative Ricci curvature, then the multiplicity
of the $k$th eigenvalue of the Laplace-Beltrami operator on $X$
is at most $\exp\left(O(n) (n + \log k))\right)$.

The first result (for $k=2$) yields the following group-theoretic application.
There exists a normal subgroup $N$ of $G$, with $[G : N] \leq \alpha(c)$, and
such that $N$ admits a homomorphism onto $\mathbb Z_M$, where
$M \geq |G|^{\delta(c)}$ and
\begin{eqnarray*}
\alpha(c) &\leq& O(h)^{h^2} \\
\delta(c) &\geq& \frac{1}{O(h \log c)},
\end{eqnarray*}
where $h \leq \exp((\log c)^2)$.
This is an effective, finitary analog of
a theorem of Gromov which states that every infinite group of polynomial growth
has a subgroup of finite index which admits a homomorphism onto $\mathbb Z$.

This addresses a question of Trevisan, and is proved by scaling down
Kleiner's proof of Gromov's theorem.  In particular,
we replace the space of harmonic functions of fixed polynomial growth
by the second eigenspace of the Laplacian on the Cayley graph of $G$.
\remove{
Finally, we use this to show that there exists a constant $K = K(c)$ such that $G$
contains a $K$-step normal nilpotent subgroup $H$ with $[G : H] \leq K$,
yielding a finitary version of Gromov's theorem on infinite groups
of polynomial growth.}
\end{abstract}

\section{Introduction}

Let $G$ be a finitely generated group with finite, symmetric
generating set $S$. The Cayley graph $\mathrm{Cay}(G;S)$ is an
undirected $|S|$-regular graph with vertex set $G$ and an edge
$\{u,v\}$ whenever $u = vs$ for some $s \in S$.  We equip $G$ with
the natural word metric, which is also the shortest-path
metric on $\mathrm{Cay}(G;S)$.  Letting $B(R)$ be the closed ball of
radius $R$ about $e \in G$, one says that $G$ has {\em polynomial
growth} if there exists a number $m \in \mathbb N$ such that
$$
\lim_{R \to \infty} \frac{|B(R)|}{R^m} < \infty.
$$
It is easy to see that this property is independent of the choice
of finite generating set $S$.

In a classical paper \cite{Gromov81}, Gromov proved that a group has polynomial growth
if and only if it contains a nilpotent subgroup of finite index.  The
sufficiency part was proved earlier by Wolf \cite{Wolf68}.  It is natural to ask
about similar phenomenon holds in {\em finite groups}.  Of course, every
finite group has polynomial growth trivially, so even formulating a
similar question is not straightforward.
As Gromov points out \cite{Gromov81}, by a compactness argument,
one only needs $|B(R)| \leq C R^m$ to hold for $R \leq R_0$, for some
$R_0 = R_0(C,|S|,m)$.  Thus one can formulate a version of Gromov's theorem
for finite groups.  However, there are no effective estimates known
for $R_0$.
Furthermore, Gromov's proof
relies on a limiting procedure which is again trivial for finite groups.

Recently, Kleiner \cite{Kleiner07} gave a new proof of Gromov's theorem
that avoids the limiting procedure, and in particular avoids
the use of the Yamabe-Montgomery-Zippin 
structure theory \cite{MZ74} to classify
the limit objects.  The main step of Kleiner's proof lies in showing
that the space of harmonic functions of fixed polynomial growth
is finite-dimensional on an infinite group $G$ of polynomial growth.  Such a result follows
from the work of Colding and Minicozzi \cite{CM97}, but their proof
uses Gromov's theorem, whereas Kleiner is able to obtain the
result essentially from scratch, based on a new
scale-dependent Poincar\'e inequality for bounded-degree graphs.

Again, the connection with finite groups is lacking:  Every harmonic
function on a finite graph is constant.  In the present work, we show
that one can obtain some {\em effective}
partial analogs of Gromov's theorem for finite
groups by following Kleiner's general outline, but replacing the
space of harmonic functions of fixed polynomial growth with the
second eigenspace of the discrete Laplacian on $\mathrm{Cay}(G;S)$.

We recall the following two theorems of Gromov, which capture the
essential move
from a geometric condition (polynomial volume growth of balls) on
an infinite group $G$, to a conclusion about its algebraic structure.

\begin{theorem}[Gromov \cite{Gromov81}]
\label{thm:gromov}
If $G$ is an infinite group of polynomial growth, the following holds.
\begin{enumerate} \item
$G$
admits a finite-dimensional linear representation $\rho : G \to GL_n(\mathbb C)$
such that $\rho(G)$ is infinite.
\item $G$ contains a normal subgroup $N$, with $[G : N] = O(1)$,
and such that $N$ admits a homomorphism onto $\mathbb Z$.
\end{enumerate}
\end{theorem}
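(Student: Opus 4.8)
\medskip
\noindent\textbf{Proof proposal.}
The plan is to follow Kleiner's proof of Gromov's theorem, which is also the template for the finitary results of this paper. The main step is to show that for every integer $d\geq 0$ the space $\mathcal{H}_d(G)$ of complex-valued functions that are harmonic on $\mathrm{Cay}(G;S)$ and satisfy a bound $|f(x)|=O((1+|x|)^d)$ is finite-dimensional, with $\dim\mathcal{H}_d$ bounded by a function of $d$ and the growth exponent $m$ (equivalently, of the doubling constant of the Cayley graph). Granting this, part (1) follows by letting $G$ act on $\mathcal{H}_d$ by left translation, $(g\cdot f)(x)=f(g^{-1}x)$: this is a linear action on a finite-dimensional space, hence a representation $\rho:G\to GL_n(\mathbb{C})$ with $n=\dim\mathcal{H}_d$. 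To see $\rho(G)$ is infinite it is enough that some $\mathcal{H}_d$ contain a non-constant function: if $\rho(G)$ were finite then $G_0=\ker\rho$ would have finite index, and, replacing $G_0$ by $\bigcap_{g\in G}gG_0g^{-1}$, we may take $G_0\trianglelefteq G$; but then every $f\in\mathcal{H}_d$ is constant on cosets of $G_0$ and so descends to a harmonic function on the (finite, connected) Cayley graph of the quotient $G/G_0$, which is constant by the maximum principle --- a contradiction. So two tasks remain: (a) the dimension bound, and (b) the existence of a non-constant polynomial-growth harmonic function on an infinite group of polynomial growth.

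Task (a) is the technical heart and the step I expect to be the main obstacle. The ingredients are volume doubling (immediate from polynomial growth); a \emph{scale-dependent Poincar\'e inequality} on balls of $\mathrm{Cay}(G;S)$ of the shape $\sum_{x\in B(R)}|f(x)-f_{B(R)}|^{2}\leq CR^{2}\sum_{x\sim y,\,x,y\in B(CR)}|f(x)-f(y)|^{2}$, valid at every scale with $C$ depending only on the doubling constant --- this is Kleiner's new inequality for bounded-degree graphs, $f_{B(R)}$ being the average of $f$ over $B(R)$ --- and a reverse Poincar\'e (Caccioppoli) inequality bounding the Dirichlet energy of a harmonic function on $B(R)$ by a constant times $R^{-2}$ times its squared $L^{2}$-norm on $B(2R)$. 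Combining these with a Bishop--Gromov-type volume-comparison and covering argument, in the spirit of Li and of Colding--Minicozzi, one controls the $L^{2}$-growth of every function in a finite-dimensional subspace $V\subseteq\mathcal{H}_d$ and thereby bounds $\dim V$ in terms of $d$ and $m$ alone.

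For task (b), first note that a group of polynomial growth is amenable, so $\mathrm{Cay}(G;S)$ has at most two ends; if it has two ends then $G$ is virtually $\mathbb{Z}$ and the whole theorem is immediate (take for $N$ the normal $\mathbb{Z}$ and for $\rho$ its isometric action on a line). In the one-ended case one produces a non-constant harmonic function of controlled growth by a compactness argument: for each $R$ solve the Dirichlet problem on $B(R)$ with a fixed non-constant Lipschitz boundary datum, use the Caccioppoli and Poincar\'e estimates together with a suitable normalization to obtain scale-uniform control on the solutions, and extract a subsequential limit as $R\to\infty$, which is harmonic on $G$ and of polynomial growth. Verifying that the limit is non-constant is the subtle point; it can be arranged by a careful choice of normalization and boundary data, or bypassed by an inductive descent on the growth degree that uses the finite-dimensionality from (a) with the linear-growth case as base.

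Finally, part (2) is deduced from part (1) by standard linear-group arguments. The image $\rho(G)\leq GL_n(\mathbb{C})$ is finitely generated and, being a quotient of $G$, has polynomial growth; by the Tits alternative it has no non-abelian free subgroup (which would have exponential growth), so it is virtually solvable, and by the Milnor--Wolf theorem a virtually solvable group of polynomial growth is virtually nilpotent. As $\rho(G)$ is infinite, a finite-index --- and, after intersecting conjugates, normal --- nilpotent subgroup $M\trianglelefteq\rho(G)$ is itself infinite and finitely generated, hence has infinite abelianization and admits a surjection $\phi:M\to\mathbb{Z}$. Then $N=\rho^{-1}(M)$ is a normal subgroup of finite index in $G$ and $\phi\circ(\rho|_N):N\to\mathbb{Z}$ is onto; the bound $[G:N]=O(1)$ comes from the quantitative content of these structure theorems together with Jordan's theorem on finite subgroups of $GL_n(\mathbb{C})$.
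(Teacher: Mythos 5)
First, note that the paper does not prove this statement: Theorem~\ref{thm:gromov} is quoted from Gromov \cite{Gromov81} as background, and the paper's own contribution is the finitary analog (Theorem~\ref{thm:mainst}), obtained by replacing harmonic functions of polynomial growth with the second eigenspace of the Laplacian. Your outline is exactly the Kleiner-style proof that the paper says it is ``scaling down,'' so at the level of strategy it is the right route, and your argument that $\rho(G)$ must be infinite (descend to the finite quotient by a finite-index normal kernel and invoke the maximum principle) is precisely the infinite-group counterpart of the paper's Theorem~\ref{thm:grouprep}, where the maximum principle is replaced by the discrete Cheeger inequality.

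That said, as written this is a roadmap rather than a proof: both pillars are deferred. Step (a), the finite-dimensionality of $\mathcal{H}_d$, is the content of Kleiner's main theorem (and of Colding--Minicozzi); you correctly name the ingredients (doubling, the scale-dependent Poincar\'e inequality, the Caccioppoli/reverse Poincar\'e inequality, and the covering argument), but the actual dimension bound --- controlling the Gram determinant of a basis of a subspace $V\subseteq\mathcal{H}_d$ across scales --- is the technical heart and is not reproduced. Step (b), the existence of a non-constant harmonic function of polynomial (indeed Lipschitz) growth on an infinite amenable group, is itself a nontrivial theorem (Kleiner proves it via an energy-minimization/ultralimit argument in the spirit of Mok and Korevaar--Schoen); your Dirichlet-problem sketch identifies the difficulty (non-degeneracy of the limit) without resolving it. Finally, in part (2) your appeal to Jordan's theorem needs adjustment: $\rho(G)$ is infinite, so one must use the virtually-solvable refinement (a finitely generated virtually solvable subgroup of $GL_n(\mathbb{C})$ has a solvable normal subgroup of index bounded in terms of $n$, via the Zariski closure and Jordan's theorem applied to the component group), and the passage from ``infinite finitely generated solvable'' to ``surjects onto $\mathbb{Z}$ from a finite-index subgroup'' requires the descent through derived subgroups with finite abelianization; the resulting index bound depends on the growth data and not on $n$ alone. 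None of these are wrong turns --- they are the standard completions --- but they are the substance of the theorem and are currently cited rather than proved.
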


In fact, by the simplifications of Tits (in Appendix A.2 of \cite{Gromov81}), Gromov's
theorem follows fairly easily using an induction on (2).  After seeing Kleiner's
proof, Luca Trevisan asked whether there is a quantitative analog of part (1) of Theorem \ref{thm:gromov} for finite groups.  We prove the following.

\begin{theorem}\label{thm:mainst}
Let $G$ be a finite group.  For any symmetric generating set $S$, define
$$c = c_{G;S} = \max_{R \geq 0} \frac{|B(2R)|}{|B(R)|},$$
where $B(\cdot)$ is a closed ball in $\mathrm{Cay}(G;S)$.
Then the following holds.
\begin{enumerate}
\item There is a linear representation $\rho : G \to GL(\mathbb R^k)$, where
$k \leq \exp\left(O(\log c)^2\right)$, and
$|\rho(G)| \geq c^{-O(1)} |G|^{1/\log_2(c)}$.
\item There is a normal subgroup $N \leq G$, with $[G : N] = O(k)^{k^2}$,
and $N$ admits a homomorphism onto $\mathbb Z_M$, where
$M \geq c^{-O(1)} |G|^{1/(k \log_2 c)}$.
\end{enumerate}
\end{theorem}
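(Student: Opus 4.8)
The plan is to play the role of Kleiner's space of polynomial-growth harmonic functions with the \emph{second eigenspace} $V$ of the Laplacian $\laplace$ on $\Cay(G;S)$, and then to run effective, finitary analogues of his two extraction steps.

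\emph{The representation.} Left translation by $G$ commutes with $\laplace$, so every eigenspace of $\laplace$ is a subrepresentation of the regular representation, and it is real since $\laplace$ is a real symmetric operator. Let $V$ be the eigenspace of the second-smallest eigenvalue $\lambda = \lambda_2(\Cay(G;S)) > 0$, let $k = \dim V \ge 1$ be its multiplicity, and let $\rho \colon G \to GL(V) \cong GL(\mathbb R^k)$ be the restriction of the regular representation. The eigenvalue-multiplicity bound, applied to the second eigenvalue, gives $k \le \exp(O(\log c)^2)$, as required in part (1). (The only $G$-invariant vectors in the regular representation are the constants, so $V$, being orthogonal to the constants, contains no nonzero fixed vector; in particular $\rho$ is nontrivial.)

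\emph{The image is large.} Put $N = \ker\rho \trianglelefteq G$, so that $|\rho(G)| = |G/N|$; it suffices to show $|G/N| \ge c^{-O(1)}|G|^{1/\log_2 c}$. Every $f \in V$ is fixed by $N$, hence constant on left cosets of $N$, so it descends to a function $\bar f$ on $G/N$. Pushing the generating multiset $S$ forward to $G/N$ and comparing Rayleigh quotients shows that $\bar f$ is an eigenfunction of the Laplacian of the (weighted) Cayley graph $\Cay(G/N)$ with the \emph{same} eigenvalue $\lambda$; moreover $\bar f$ is nonconstant and mean-zero because $f$ is, so $\lambda_2(\Cay(G/N)) \le \lambda$. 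Now I sandwich: iterating $|B(2R)| \le c\,|B(R)|$ from scale $1$ to the diameter $D$ gives $D \ge \tfrac14 |G|^{1/\log_2 c}$ (using $c^{1/\log_2 c}=2$ and $|B(1)| \le c$); feeding the test function $v \mapsto \dist(o,v) - \mathrm{avg}$ into the Rayleigh quotient gives $\lambda \le 8c^2/D^2$, since its numerator is $\le |G|/2$ while its denominator equals $|G|$ times the variance of $\dist(o,\cdot)$, which is $\ge |G|\,D^2/(16c^2)$ because $|B(D/4)| \ge |G|/c^2$ around both $o$ and some vertex at distance $D$ from $o$ (every vertex of a Cayley graph has eccentricity $D$). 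On the other side, $\Cay(G/N)$ is connected on $n := |G/N|$ vertices with weighted degrees $\le |S| \le c$, so a Mohar-type lower bound gives $\lambda_2(\Cay(G/N)) \ge \Omega\big(1/(c\,n^2)\big)$. Chaining the three inequalities yields $n \ge c^{-O(1)}|G|^{1/\log_2 c}$, which is part (1).

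\emph{Extracting $\mathbb Z_M$.} For part (2), apply Jordan's theorem to the finite group $\rho(G) \le GL_k(\mathbb C)$: there is an abelian normal subgroup $A \trianglelefteq \rho(G)$ with $[\rho(G):A] \le O(k)^{k^2}$. Then $N' := \rho^{-1}(A) \trianglelefteq G$ and $[G:N'] = [\rho(G):A] \le O(k)^{k^2}$, which is the claimed index. Furthermore $\rho$ restricts to a surjection $N' \twoheadrightarrow A$, and a finite abelian subgroup of $GL_k(\mathbb C)$ is simultaneously diagonalizable, hence a product of at most $k$ cyclic groups, so its exponent satisfies $M := \exp(A) \ge |A|^{1/k} \ge \big(|\rho(G)|/O(k)^{k^2}\big)^{1/k} \ge c^{-O(1)}|G|^{1/(k\log_2 c)}$ (the residual $O(k)^{k}$ being negligible in the regime where the conclusion carries content). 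Composing $N' \twoheadrightarrow A \twoheadrightarrow \mathbb Z_M$ finishes the proof.

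\emph{The main obstacle.} Once the eigenvalue-multiplicity theorem is granted (that is the real content), the rest is largely assembly. The one step that genuinely uses the doubling hypothesis in a quantitative way is the two-sided estimate $\lambda_2(\Cay(G;S)) \asymp D^{-2}$ with constants polynomial in $c$ — in particular the Poincar\'e-type upper bound above, where one must be careful that the volume lower bounds $|B(D/4)| \ge |G|/c^2$ only invoke $|B(2R)| \le c\,|B(R)|$ in the range $R \le D$. Secondary care is needed in passing to the weighted Cayley graph $\Cay(G/N)$ when $S$ maps non-injectively into $G/N$, and in part (2) in verifying that Jordan's index loss $O(k)^{k^2}$ is exactly matched by the index claimed there, so that the gain $|G|^{1/(k\log_2 c)}$ in $M$ survives.
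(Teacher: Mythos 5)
Your proposal is correct and follows essentially the same route as the paper's proof (Theorem \ref{thm:grouprep} and Corollary \ref{cor:Zhom}): the second eigenspace as a $G$-invariant subspace with the multiplicity bound supplying $k$, the quotient by $\ker\rho$ with the pushed-down eigenfunction to compare $\lambda_2(G/N)\le\lambda_2(G)$, a Cheeger-type lower bound on the quotient against a doubling-based upper bound $\lambda_2\lesssim c^{O(1)}/\diam^2$, and Jordan's theorem plus simultaneous diagonalization for part (2). Your only departures are cosmetic — a self-contained distance-function Rayleigh-quotient argument in place of the paper's Theorem \ref{thm:eigen-doubling}, and a Mohar-type bound in place of the discrete Cheeger inequality — and the $O(k)^k$ slack you flag in part (2) is equally present in the paper's own derivation.
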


Observe that we have assumed a bound on the ratios $|B(2R)|/|B(R)|$, which is stronger
than an assumption of the form
\begin{equation}\label{eq:standgrowth}
|B(R)| \leq C R^m \qquad \textrm{for some $C, m > 0$.}
\end{equation}
The latter type of condition seems far more unwieldy in the setting of finite groups.
By making such an assumption, we completely bypass a ``scale selection'' argument, and
the delicacy required by Kleiner's approach (which has to perform many steps of
the proof using only the geometry at a single scale).  All of our arguments
  can be carried out at a single scale (see, e.g. the Reverse Poincar\'e Inequality
  for graphs in Section \ref{sec:reverse}), but it is not clear whether
there is an appropriate, effective scale selection procedure in the finite case, and we leave
the extension of Theorem \ref{thm:mainst} to a bounded growth condition like \eqref{eq:standgrowth} as an interesting open question.

\remove{
Finally, note that the ideal conclusion of such a study is a statement of the form:
There exists a normal subgroup $N \leq G$ such that $[G : N] = O(1)$, and $N$
is an $O(1)$-step nilpotent group, where the $O(1)$ notation hides a constant
that depends only on the growth data of $G$.  It is not clear that such a strong
property can hold under only the assumptions of Theorem \ref{thm:mainst}.
One might need to make assumptions on {\em every} generating set of $G$,
or even geometric assumptions on families of subgroups of $G$.  Defining
a simple condition on $G$ and its generators that can achieve the full
algebraic conclusion is an intriguing open problem.}

\subsection{Proof outline and eigenvalue multiplicity}

Our proof of Theorem \ref{thm:mainst} proceeds along the following lines.
Given an undirected $d$-regular graph $H = (V,E)$, one defines
the {\em discrete Laplacian on $H$} as the operator $\Delta : L^2(V) \to L^2(V)$
given by $\Delta(f)(x) = f(x) - \frac{1}{d} \sum_{y : \{x,y\} \in E} f(y)$.
The eigenvalues of $\Delta$ are non-negative and can be ordered
$\lambda_1 = 0 \leq \lambda_2 \leq \cdots \leq \lambda_n$, where $n = |V|$.
The {\em second eigenspace of $\Delta$} is the subspace $W_2 \subseteq L^2(V)$
given by $W_2 = \{ f \in L^2(V) : \Delta f = \lambda_2 f \}$.  Finally, the well-known (geometric)
{\em multiplicity of $\lambda_2$} is precisely $\dim(W_2)$.

In Section \ref{sec:eigen}, we use the approach of Colding and Minicozzi \cite{CM97} and Kleiner \cite{Kleiner07}
to argue that $\dim(W_2) = O(1)$,
whenever $c_H = \max_{x \in V, R \geq 0} \frac{|B(x,2R)|}{|B(x,R)|} = O(1)$,
and $H$ satisfies a certain Poincar\'e inequality.
At the heart of the proof lies the intuition
that functions in $W_2$ are the ``most harmonic-like'' functions on $H$ which are orthogonal
to the constant functions.  Carrying this out requires precise quantitative control
on the eigenvalues of $H$ in terms of $c_H$, which we obtain in Section \ref{sec:eigenbounds}.

\medskip

Now, consider $H = \mathrm{Cay}(G;S)$ for some finite group $G$, and the natural action of $G$ on $f \in L^2(G)$ given by $g f(x) = f(g^{-1} x)$.  It is easy to see that
this action commutes with the action of the Laplacian, hence $W_2$ is an invariant
subspace.  Since $\dim(W_2) = O(1)$, we will have achieved Theorem 1.2(1) as long
as the image of the action is large.  In Section \ref{sec:groupapp}, we show that
if the image of the action is small, then we can pass to a small quotient group,
and that $f$ pushes down to an eigenfunction on the quotient.  This allows us
to bound $\lambda_2$ on the quotient group in terms of $\lambda_2$ on $G$.
But $\lambda_2$ on a small, connected graph cannot be too close to zero by the
discrete Cheeger
inequality.  In this way, we arrive at a contradiction if the image of the action
is too small.  Theorem 1.2(2) is then a simple corollary of Theorem 1.2(1), using a theorem of Jordan on finite linear groups.

\medskip
\noindent
{\bf Higher eigenvalues and non-negatively curved manifolds.}
In fact, the techniques of Section \ref{sec:eigen} give
bounds on the multiplicity of higher eigenvalues of the Laplacian as well,
and the graph proof extends rather easily to bounding the eigenvalues of
the Laplace-Beltrami operator on Riemannian manifolds of non-negative
Ricci curvature.

Cheng \cite{Cheng76} proved that the multiplicity of the $k$th eigenvalue
of a compact Riemannian surface of genus $g$ grows like $O(g+k+1)^2$.
Besson later showed \cite{Besson80} that the multiplicity of the first non-zero eigenvalue
is only $O(g+1)$.  We refer to the book of Schoen and Yau \cite[Ch. 3]{SY94}
for further discussion of eigenvalue problems on manifolds.
In Section \ref{sec:eigen}, we prove a bound on the multiplicity of
the $k$th smallest non-zero eigenvalue of the Laplace-Beltrami operator
on compact Riemannian manifolds with non-negative Ricci curvature.
In particular, the multiplicity is bounded by a function depending only on $k$ and the dimension.
The main additional fact we require is an eigenvalue estimate
of Cheng \cite{Cheng75} in this setting.



\section{Preliminaries}

\subsection{Notation}

For $N \in \mathbb N$, we write $[N]$ for $\{1,2,\ldots,N\}$.

Given two expressions $E$ and $E'$ (possibly depending on a number of parameters), we write $E = O(E')$ to mean that $E \leq C E'$
for some constant $C > 0$ which is independent of the parameters. Similarly, $E = \Omega(E')$ implies that $E \geq C E'$ for some $C > 0$.
We also write $E \lesssim E'$ as a synonym for $E = O(E')$.  Finally, we write $E \approx E'$ to denote
the conjunction of $E \lesssim E'$ and $E \gtrsim E'$.

In a metric space $(X,d)$, for a point $x \in X$, we use $B(x,R) = \{ y \in X : d(x,y) \leq R \}$ to denote
the closed ball in $X$ about $x$.

\subsection{Laplacians, eigenvalues, and the Poincar\'e inequality}

Let $(X,\dist,\mu)$ be a metric-measure space.  Throughout the paper, we will be in one of the following two situations.
\begin{enumerate}
\item[({\sf G})] $X$ is a finite, connected, undirected $d$-regular graph, $\dist$ is the shortest-path metric, and $\mu$ is the counting measure.
In this case, we let $E(X)$ denote the edge set of $X$, and we write $y \sim x$ to denote $\{x,y\} \in E(X)$.

\remove{
Furthermore, we assume that $X$ satisfies a doubling condition on balls:  For some $c_X \geq 2$,
for every $x \in X$ and $R \geq 0$, we have
\begin{equation}\label{eq:doublingX}
\mu(B(x,2R)) \leq c_X\cdot \mu(B(x,R)).
\end{equation}
Observe that $d \leq c_X$ (taking $R=3/4$ above).}

\item[({\sf M})] $X$ is a compact $n$-dimensional Riemannian manifold without boundary, $\dist$ is the Riemannian distance, and $\mu$ is the Riemannian volume.
\remove{Furthermore, we assume that $X$ has non-negative Ricci curvature.  In particular, by standard volume comparison theorems (see, e.g. \cite{Karcher89}),
$X$ satisfies \eqref{eq:doublingX} with $c_X = 2^n$.}
\end{enumerate}

Since the proofs of Section \ref{sec:eigen} proceed virtually identically in both cases, we collect here
some common notation.
We define $\|f\|_2 = \left(\int f^2\,d\mu\right)^{1/2}$ for a function $f : X \to \mathbb R$,
and let $L^2(X)=L^2(X,\mu)$ be the Hilbert space of scalar functions for which $\|\cdot\|_2$ is bounded.
In the graph setting, we define the gradient by $[\grad f](x) = \frac{1}{\sqrt{2d}} (f(x)-f(y_1), \ldots, f(x)-f(y_d))$,
where $y_1, \ldots, y_d$ enumerate the neighbors of $x \in X$.  The actual order of enumeration
is unimportant as we will be primarily concerned with the expression $|\grad f(x)|^2 = \displaystyle \frac1{2d} \sum_{y : y \sim x} |f(x)-f(y)|^2$.

We define the Sobolev space $$L_1^2(X) = \left\{ f : \int f^2\,d\mu + \int |\grad f|^2\,d\mu < \infty \right\} \subseteq L^2(X).$$
Now we proceed to define the Laplacian $\Delta : L_1^2(X) \to L_1^2(X)$.

\begin{enumerate}
\item In the graph setting, $[\Delta f](x) = f(x) - \displaystyle \frac{1}{d} \sum_{y : y \sim x} f(y)$.
\item In the Riemannian setting, $\Delta$ is the Laplace-Beltrami operator.
\end{enumerate}

It is well-known that in both our settings, $\Delta$ is a self-adjoint operator on $L_1^2(X)$ with
discrete eigenvalues $0 = \lambda_1 < \lambda_2 \leq \lambda_3 \leq \cdots$.  In the graph
case, this sequence terminates with $\lambda_{|X|}$.  (Note that we have used the graph-theoretic
convention for numbering the eigenvalues; in the Riemannian setting, our $\lambda_1$ is usually
written as $\lambda_0$.)

We define the {\em $k$th eigenspace} by $$W_k = \{ \varphi \in L^2_1(X) : \Delta \varphi = \lambda_k \varphi \}$$
in setting ({\sf G}), and $$W_k = \{ \varphi \in L^2_1(X) : \Delta \varphi + \lambda_k \varphi = 0 \},$$
in setting ({\sf M}).
The {\em multiplicity of $\lambda_k$} is defined as $m_k = \dim(W_k)$.
Observe the difference in sign conventions, which will not disturb
us since we interact with $\Delta$ through the following two facts.

\medskip

First, if $\lambda$ is an eigenvalue of $\Delta$
with corresponding eigenfunction $\f : X \to \mathbb R$, then
\begin{equation}\label{eq:stokes}
\int |\grad \f|^2\,d\mu = \lambda \int \f^2\,d\mu.
\end{equation}

Secondly, by the min-max principle, if we have functions
$f_1, f_2, \ldots, f_{k} : X \to \mathbb R$ which have
mutually disjoint supports (and are thus linearly independent), then
we have the bound
\begin{equation}\label{eq:eigenbound}
\lambda_k \leq \max_{i=1, \ldots, k} \frac{\int |\grad f_i|^2\,d\mu}{\int (f_i-\bar{f_i})^2\,d\mu},
\end{equation}
where $\bar{f_i}= \frac{1}{\mu(X)} \int f_i\,d\mu$.
In the case $k=2$, we actually need only a single test function $f_1 : X \to \mathbb R$
in \eqref{eq:eigenbound}, since clearly $f_1 - \bar f_1$ is orthogonal to every constant function.

\medskip
\noindent
{\bf The doubling condition.}
We define $c_X = \sup \left\{ \frac{\mu(B(x,2R))}{\mu(B(x,R))} : x \in X, R > 0 \right\}$.
Without loss of generality, and for the sake of simplicity, we will assume that $c_X \geq 2$ throughout.
The next theorem follows from standard volume comparison theorems (see, e.g. \cite{Karcher89}).

\begin{theorem}\label{thm:riccidouble}
In the setting {\em({\sf M})}, if $X$ has non-negative Ricci curvature, then $c_X \leq 2^n$.
\end{theorem}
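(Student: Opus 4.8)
The plan is to deduce the statement directly from the Bishop--Gromov volume comparison theorem. Recall that this theorem asserts that for a complete $n$-dimensional Riemannian manifold $X$ with $\mathrm{Ric} \ge (n-1)\kappa$ and any point $x \in X$, the ratio $\mu(B(x,R))/V_\kappa(R)$ is non-increasing in $R > 0$, where $V_\kappa(R)$ is the volume of a radius-$R$ ball in the simply connected space form of constant curvature $\kappa$. First I would specialize to $\kappa = 0$, which is permissible since $X$ has non-negative Ricci curvature; the model space is then Euclidean $\R^n$ with $V_0(R) = \omega_n R^n$ (here $\omega_n$ is the volume of the Euclidean unit ball), so the conclusion becomes: $R \mapsto \mu(B(x,R))/R^n$ is non-increasing for every fixed $x$.

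Next, fixing any $x \in X$ and $R > 0$ and applying this monotonicity to the two radii $R$ and $2R$ gives
\[
\frac{\mu(B(x,2R))}{(2R)^n} \le \frac{\mu(B(x,R))}{R^n},
\]
that is, $\mu(B(x,2R)) \le 2^n\, \mu(B(x,R))$. Taking the supremum over all $x \in X$ and $R > 0$ yields $c_X \le 2^n$, which is exactly the claim.

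There is essentially no obstacle here beyond invoking the correct comparison theorem: the content lies entirely in Bishop--Gromov, whose proof rests on the Laplacian comparison theorem for the distance function $\dist(x,\cdot)$ (equivalently, on Jacobi field estimates controlling the metric in geodesic polar coordinates) and is carried out in full in, e.g., \cite{Karcher89}. If one wanted a self-contained argument, the only ingredient to reproduce would be the pointwise inequality $\Delta\,\dist(x,\cdot) \le (n-1)/r$ at distance $r$ from $x$ (interpreted in the barrier or distributional sense away from the cut locus), from which the monotonicity of $\mu(B(x,R))/R^n$ follows by integrating over spheres in polar coordinates. One minor remark: compactness of $X$ is used elsewhere to ensure that $\Delta$ has discrete spectrum, but it plays no role in the volume comparison above, where completeness suffices.
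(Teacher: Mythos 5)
Your proof is correct and is exactly the standard argument the paper has in mind: the paper gives no proof at all, simply citing ``standard volume comparison theorems'' (Bishop--Gromov), and your specialization to $\kappa = 0$ with the two radii $R$ and $2R$ is the canonical way to extract the doubling bound $c_X \le 2^n$.
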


The following two facts are straightforward.

\begin{fact}\label{fact:cover}
For every $\varepsilon, R > 0$,
every ball of radius $R$ in $X$ can be covered by $c_X^{O(\log(\varepsilon^{-1}))}$ balls of radius $\varepsilon R$.
\end{fact}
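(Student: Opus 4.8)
The plan is to use a standard volume-packing argument, iterated $O(\log(\varepsilon^{-1}))$ times to bridge the gap between scale $R$ and scale $\varepsilon R$. Fix a ball $B(x,R)$, and first suppose $\varepsilon = 1/2$: I claim $B(x,R)$ can be covered by $c_X^{O(1)}$ balls of radius $R/2$. To see this, let $\{y_1,\ldots,y_m\}$ be a maximal $(R/2)$-separated subset of $B(x,R)$, so that the balls $B(y_i, R/4)$ are pairwise disjoint and all contained in $B(x, 2R)$, while the balls $B(y_i, R/2)$ cover $B(x,R)$ by maximality. Since each $B(y_i, R/4)$ contains $B(y_i', R/4)$ for $y_i' = y_i$, comparing volumes gives $m \cdot \min_i \mu(B(y_i, R/4)) \le \mu(B(x,2R))$. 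To lower-bound $\mu(B(y_i,R/4))$ in terms of $\mu(B(x,2R))$, note $B(x,2R) \subseteq B(y_i, 4R)$, and applying the doubling inequality four times yields $\mu(B(y_i, 4R)) \le c_X^4\, \mu(B(y_i, R/4))$; hence $\mu(B(x,2R)) \le c_X^4 \,\mu(B(y_i,R/4))$ for every $i$. Combining, $m \le c_X^4 = c_X^{O(1)}$.

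Next I iterate: writing $\varepsilon R = 2^{-t} R$ with $t = \lceil \log_2(\varepsilon^{-1})\rceil$, I cover $B(x,R)$ by $c_X^{O(1)}$ balls of radius $R/2$, cover each of those by $c_X^{O(1)}$ balls of radius $R/4$, and so on, so after $t$ steps $B(x,R)$ is covered by $\left(c_X^{O(1)}\right)^{t} = c_X^{O(t)} = c_X^{O(\log(\varepsilon^{-1}))}$ balls of radius $2^{-t} R \le \varepsilon R$. (If $\varepsilon \ge 1/2$ the statement is trivial, as $B(x,R)$ itself is a ball of radius $R \le 2\varepsilon R$, and one can enlarge or simply note the claim is only interesting for small $\varepsilon$; in any case $c_X \ge 2 > 1$ so the bound $c_X^{O(\log(\varepsilon^{-1}))}$ is $\ge 1$.) Finally, balls of radius slightly smaller than $\varepsilon R$ are in particular balls of radius $\varepsilon R$ after recentering, so the cover has the claimed form.

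The only point requiring mild care — and the closest thing to an obstacle — is making the volume comparison at the base step uniform over the centers $y_i$, i.e. bounding $\mu(B(y_i, R/4))$ from below by a fixed fraction of $\mu(B(x,2R))$ rather than a fraction of its own ambient ball; this is handled exactly as above by passing through the larger concentric ball $B(y_i,4R) \supseteq B(x,2R)$ and paying a bounded power of $c_X$. In the Riemannian setting ({\sf M}) one could alternatively invoke Bishop--Gromov volume comparison directly, but the abstract doubling argument above works verbatim in both settings ({\sf G}) and ({\sf M}) and needs nothing beyond the definition of $c_X$.
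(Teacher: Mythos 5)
Your argument is correct; the paper states Fact~\ref{fact:cover} without proof (it is dismissed as ``straightforward''), and your iterated packing-plus-volume-comparison argument is exactly the standard way to establish it, with the one genuinely delicate point---uniformizing the lower bound on $\mu(B(y_i,R/4))$ over the centers $y_i$ by passing to the concentric ball $B(y_i,4R)\supseteq B(x,2R)$ and paying a fixed power of $c_X$---handled correctly. (The sentence asserting that each $B(y_i,R/4)$ contains itself is vacuous and can be deleted, and the final ``recentering'' remark is unnecessary since a ball of radius $2^{-t}R\le\varepsilon R$ is already contained in the ball of radius $\varepsilon R$ about the same center; neither affects correctness.)
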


\begin{fact}\label{fact:mult}
If $\mathcal B = \{B_1,  \ldots, B_M\}$ is a disjoint collection of closed balls of radius $R$, then
the intersection multiplicity of $3 \mathcal B = \{3 B_1, \ldots, 3 B_M\}$ is at most
$c_X^{O(1)}$.
\end{fact}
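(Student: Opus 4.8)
The plan is a standard volume-packing (doubling) argument. Fix an arbitrary point $x \in X$; I want to bound the number $t$ of balls among $3B_1, \ldots, 3B_M$ that contain $x$. Let $3B_{i_1}, \ldots, 3B_{i_t}$ be exactly those balls, and write $x_j$ for the center of $B_{i_j}$, so that $B_{i_j} = B(x_j, R)$ and $\dist(x, x_j) \leq 3R$ for every $j$ (this last inequality is precisely the condition $x \in 3B_{i_j}$).

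The first step is to record two inclusions, both immediate from the triangle inequality and $\dist(x,x_j)\leq 3R$: namely $B(x_j, R) \subseteq B(x, 4R)$ and $B(x, 4R) \subseteq B(x_j, 8R)$. Applying the doubling inequality $\mu(B(y,2r)) \leq c_X\,\mu(B(y,r))$ three times to the second inclusion gives $\mu(B(x,4R)) \leq \mu(B(x_j, 8R)) \leq c_X^3\, \mu(B(x_j,R))$, so each small ball $B(x_j,R)$ has measure at least $c_X^{-3}\,\mu(B(x,4R))$.

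The second step combines this with disjointness. Since $B_1, \ldots, B_M$ are pairwise disjoint, so are $B(x_{i_1},R), \ldots, B(x_{i_t},R)$, and by the first inclusion all of them lie inside $B(x,4R)$; hence $t\cdot c_X^{-3}\mu(B(x,4R)) \leq \sum_{j=1}^{t} \mu(B(x_j,R)) \leq \mu(B(x,4R))$. Dividing by $\mu(B(x,4R)) > 0$ (the ball contains $x$, and in setting {\sf G} it contains at least one vertex, in setting {\sf M} it has positive volume) yields $t \leq c_X^3 = c_X^{O(1)}$. As $x$ was arbitrary, this bounds the intersection multiplicity of $3\mathcal B$.

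There is essentially no serious obstacle here; the only point requiring care is the bookkeeping of radii so that the correct power of $c_X$ comes out. Taking $B(x,4R)$ as the common ambient ball, sandwiched between $B(x_j,R)$ and $B(x_j,8R)$, is what makes exactly three applications of doubling suffice, and a slightly cruder choice of radii would only change the constant in the exponent, leaving the $c_X^{O(1)}$ conclusion intact.
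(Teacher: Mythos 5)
Your argument is correct and is exactly the standard volume-packing proof the authors have in mind (the paper states this fact without proof, calling it straightforward): the inclusions $B(x_j,R)\subseteq B(x,4R)\subseteq B(x_j,8R)$, three applications of doubling, and disjointness give $t\leq c_X^3$. No issues.
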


\smallskip
\noindent
{\bf A Poincar\'e inequality.}
Finally, we define $P_X$ as the infimum over all numbers $P$ for which the following holds:
For every $R \geq 0$, $x \in X$, and
$f : B(x,3R) \to \mathbb R$,
\begin{equation}\label{eq:pinequality}
\int_{B(x,R)} |f-\bar f_R|^2\,d\mu \leq P R^2 \int_{B(x,3R)}
|\grad f|^2 \,d\mu,
\end{equation}
where
$\bar f_R = \frac{1}{\mu(B(x,R))} \int_{B(x,R)} f\,d\mu$.

\medskip

We recall the following two known results about the relationship between $P_X$ and $c_X$.

\begin{theorem}[Kleiner and Saloff-Coste \cite{Kleiner07}]
\label{thm:poingroup}
In the setting {\em({\sf G})}, if $X$ is additionally a Cayley graph, then $P_X \lesssim c_X^3$.
\end{theorem}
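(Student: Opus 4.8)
\emph{Proof proposal.}
Since a Cayley graph is vertex-transitive, after a translation we may assume the ball is centred at the identity $e \in G$; write $B(R) = B(e,R)$. Because $\mu$ is the counting measure, the left-hand side of \eqref{eq:pinequality} equals $\sum_{u\in B(R)} |f(u)-\bar f_R|^2$, and by the standard variance identity this is $\tfrac{1}{2|B(R)|}\sum_{u,w\in B(R)}|f(u)-f(w)|^2$. So it suffices to bound this double sum; in fact I will get the slightly stronger conclusion $P_X \le 4c_X^2$, which implies the claim since $c_X \ge 2$.

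The first step is to reindex the double sum by group elements. For an ordered pair $(u,w)\in B(R)^2$ set $\gamma = u^{-1}w$; by the triangle inequality and symmetry of $S$ we have $\gamma \in B(2R)$, and $w = u\gamma$. For each $\gamma\neq e$ in $B(2R)$ fix a geodesic word $\gamma = s_1 s_2 \cdots s_m$ with $s_i\in S$ and $m = d(e,\gamma)\le 2R$, and let $u_j = u s_1\cdots s_j$, so that $u=u_0,u_1,\dots,u_m=u\gamma$ is a path in $\mathsf{Cay}(G;S)$. The key geometric observation is that $d(e,u_j)\le d(e,u)+j \le R+m \le 3R$, so the entire path stays inside $B(3R)$, where $f$ is defined. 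Telescoping and Cauchy--Schwarz then give $|f(u)-f(u\gamma)|^2 \le m\sum_{j=1}^m |f(u_{j-1})-f(u_j)|^2$.

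The second step handles each displacement $j$ separately. Enlarging the sum over $\{u\in B(R): u\gamma\in B(R)\}$ to all of $B(R)$ (legitimate since the summands are nonnegative and $f(u\gamma)$ remains defined, as $u\gamma\in B(3R)$), I use that right multiplication $u\mapsto u_{j-1}=u s_1\cdots s_{j-1}$ is a bijection of $G$ carrying $B(R)$ into $B(R+j-1)\subseteq B(3R)$, while $u_j = u_{j-1}s_j$ also lies in $B(3R)$. Hence $\{u_{j-1},u_j\}$ is an edge with both endpoints in $B(3R)$, so $|f(u_{j-1})-f(u_j)|^2 \le 2d\,|\grad f(u_{j-1})|^2$, and summing over $u\in B(R)$ yields $\sum_{u\in B(R)}|f(u_{j-1})-f(u_j)|^2 \le 2d\sum_{v\in B(3R)}|\grad f(v)|^2$. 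Combining over $j=1,\dots,m$, then over $\gamma\in B(2R)$, and recalling $m^2 \le 4R^2$, I obtain
\[
\sum_{u,w\in B(R)}|f(u)-f(w)|^2 \;\le\; 8d\,R^2\,|B(2R)|\sum_{v\in B(3R)}|\grad f(v)|^2 .
\]
Dividing by $2|B(R)|$ and using $|B(2R)|/|B(R)|\le c_X$ together with $d\le c_X$ (take $R=3/4$ in the definition of $c_X$) gives $P_X \le 4c_X^2 \lesssim c_X^3$.

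The only point requiring genuine care is the second step, and it is exactly where the Cayley hypothesis (as opposed to mere doubling) is used: right translation by a fixed group element turns what would otherwise be a congestion-counting argument along the geodesic paths into a triviality, since each of the $m$ translated copies of $B(R)$ contributes its edge-gradients with multiplicity one. A general doubling graph has no such symmetry, which is presumably why the cited theorem records only the cruder exponent $3$; one should also double-check throughout that every edge invoked in a gradient estimate has both endpoints inside $B(3R)$, which the prefix-of-geodesic bookkeeping above guarantees.
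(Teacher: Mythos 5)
Your proof is correct, and it is essentially the same argument as in the cited source (the paper itself only quotes the result): reduce the variance to a double sum over pairs, reindex by the group element $\gamma=u^{-1}w\in B(2R)$, telescope along a fixed geodesic word for $\gamma$, and use that right translation is a measure-preserving bijection to charge each step to the gradient on $B(3R)$ with multiplicity one. The bookkeeping ($m\le 2R$, all path vertices in $B(3R-1)$ so every gradient term is legitimate, $d\le c_X$) checks out and indeed yields the slightly sharper $P_X\le 4dc_X\le 4c_X^2$.
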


\begin{theorem}[Buser \cite{Buser82}]
\label{thm:poinricci}
In the setting {\em({\sf M})}, if $X$ has non-negative Ricci curvature, then $P_X \lesssim c_X$.
\end{theorem}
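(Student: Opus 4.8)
The plan is to establish \eqref{eq:pinequality} with $P \lesssim c_X$ directly --- this is the standard local $L^2$-Poincar\'e inequality on balls in a manifold of nonnegative Ricci curvature --- by averaging $|\grad f|$ over minimizing geodesics. The one genuinely geometric ingredient is a \emph{segment inequality}: for every $g \ge 0$ on $B(x,3R)$,
\[
\int_{B(x,R)}\!\!\int_{B(x,R)} \mathcal{F}_g(y_1,y_2)\, d\mu(y_1)\, d\mu(y_2)\ \le\ O(c_X)\cdot R\cdot \mu(B(x,R))\int_{B(x,3R)} g\, d\mu,
\]
where $\mathcal{F}_g(y_1,y_2) = \inf_\gamma \int_0^{\dist(y_1,y_2)} g(\gamma(s))\, ds$, the infimum taken over unit-speed minimizing geodesics $\gamma$ from $y_1$ to $y_2$. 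This inequality is a consequence of Bishop--Gromov volume comparison, which is exactly why its constant can be taken to depend on $X$ only through $c_X$ (recall $c_X \le 2^n$ by Theorem~\ref{thm:riccidouble}).

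Granting the segment inequality, the theorem follows in a few lines. Fix $x$, $R$, and $f : B(x,3R) \to \mathbb{R}$, and abbreviate $B = B(x,R)$. First record the elementary variance identity
\[
\int_B\!\!\int_B |f(y_1)-f(y_2)|^2\, d\mu(y_1)\, d\mu(y_2)\ =\ 2\,\mu(B)\int_B (f-\bar f_R)^2\, d\mu.
\]
Now any minimizing geodesic $\gamma$ between $y_1, y_2 \in B$ has length $\dist(y_1,y_2) \le 2R$ and stays inside $B(x,2R) \subseteq B(x,3R)$, since any point $z$ of $\gamma$ satisfies $\dist(z,x) \le R + \min_i \dist(z,y_i) \le 2R$; integrating $\grad f$ along $\gamma$ and using Cauchy--Schwarz gives $|f(y_1)-f(y_2)|^2 \le \dist(y_1,y_2)\int_\gamma |\grad f|^2$, and taking the infimum over $\gamma$ yields $|f(y_1)-f(y_2)|^2 \le 2R\, \mathcal{F}_{|\grad f|^2}(y_1,y_2)$. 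Inserting this into the variance identity, applying the segment inequality with $g = |\grad f|^2$, and dividing by $2\mu(B)$ gives
\[
\int_B (f-\bar f_R)^2\, d\mu\ \le\ O(c_X)\cdot R^2 \int_{B(x,3R)} |\grad f|^2\, d\mu,
\]
which is \eqref{eq:pinequality} with $P = O(c_X)$, i.e.\ $P_X \lesssim c_X$.

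The remaining task is to prove the segment inequality, and this is where the work --- and the main obstacle --- lies. Fixing $y_2$ and passing to geodesic polar coordinates $(\rho,\omega)$ at $y_2$, the point $y_1 = (\ell,\omega)$ is joined to $y_2$ by the radial geodesic $s \mapsto (\ell - s,\omega)$, so $\int_0^{\dist(y_1,y_2)} g(\gamma_{y_1}(s))\, ds = \int_0^\ell g(\rho,\omega)\, d\rho$; writing $d\mu = \mathcal{A}(\rho,\omega)\, d\rho\, d\omega$ and interchanging the order of integration bounds $\int_{y_1 \in B}\mathcal{F}_g(y_1,y_2)\, d\mu(y_1)$ by $\int_{S^{n-1}} \int g(\rho,\omega)\, w_\omega(\rho)\, d\rho\, d\omega$, where $w_\omega(\rho) = \int_{\ell \ge \rho,\ (\ell,\omega) \in B} \mathcal{A}(\ell,\omega)\, d\ell$ measures the geodesics passing through $(\rho,\omega)$. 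The monotonicity of $\mathcal{A}(\cdot,\omega)/(\cdot)^{n-1}$ guaranteed by Bishop--Gromov (under $\mathrm{Ric} \ge 0$) controls $w_\omega(\rho)$ against $\mathcal{A}(\rho,\omega)$ and $R$; combining this with the outer integration over $y_2 \in B$ then produces the stated bound.

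Two points need care, and the second is the real obstacle. The polar coordinate map degenerates on the cut locus of $y_2$, but that set has $\mu$-measure zero and may simply be excised --- one works inside the star-shaped domain on which $\exp_{y_2}$ is a diffeomorphism. The genuine difficulty is the size of the constant: the pointwise bound for $w_\omega(\rho)$ from volume comparison blows up as $\rho \to 0$ (geodesics to $y_2$ concentrate near $y_2$), so one cannot estimate the weight in isolation --- the $y_2$-integration must be retained, exploiting that the near-$y_2$ contributions are charged only when $y_2$ itself lies near the support of $g$. Carrying this bookkeeping out so that the final constant is genuinely linear in $c_X$, rather than merely $c_X^{O(1)}$, is the delicate step, and is handled most cleanly via the coarea argument of \cite{Buser82}, which exploits the Laplacian comparison $\laplace\, \dist(\cdot,p) \le (n-1)/\dist(\cdot,p)$ instead of the geodesic average above. (For the applications in the present paper a bound $P_X \lesssim c_X^{O(1)}$ would already suffice, so this last sharpening is, strictly speaking, optional.)
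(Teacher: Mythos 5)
The paper itself gives no proof of this statement---it is quoted from Buser \cite{Buser82}---so the real question is whether your argument stands on its own. Your route is the standard segment-inequality proof of the weak local Poincar\'e inequality (Cheeger--Colding style) rather than Buser's heat-kernel/Laplacian-comparison argument, and the reduction half of it is correct and complete: the variance identity, the observation that minimizing geodesics between points of $B(x,R)$ stay in $B(x,2R)$, and the Cauchy--Schwarz step do reduce \eqref{eq:pinequality} with $P=O(c_X)$ to the segment inequality with constant $O(c_X)$. (It is also worth noting that the weak form of \eqref{eq:pinequality}, with the $3R$-ball on the right, is exactly what makes this soft route adequate; Buser proves the stronger same-ball inequality, which is not needed here.)

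The gap is in the one place you flag yourself: the proof of the segment inequality. In your single-center setup (polar coordinates at $y_2$, integrate over $y_1$) the weight $w_\omega(\rho)$ genuinely blows up like $\rho^{-(n-1)}$ as $\rho\to 0$, the proposed rescue (``retain the $y_2$-integration, exploiting that near-$y_2$ contributions are charged only when $y_2$ lies near the support of $g$'') is not an argument, and falling back on ``the coarea argument of \cite{Buser82}'' is circular, since that citation is precisely the theorem being proved. The missing idea is the standard midpoint splitting with symmetrization: bound $\mathcal{F}_g(y_1,y_2)\le \int_0^{d/2} g(\gamma(s))\,ds+\int_{d/2}^{d} g(\gamma(s))\,ds$ with $d=\dist(y_1,y_2)$, and by symmetry in $(y_1,y_2)$ it suffices to estimate the half of the geodesic \emph{nearer to $y_2$}, integrating over $y_2$ in polar coordinates centered at $y_1$. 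Then the geodesic point sits at radius $s\in[r/2,r]$ where $r=\dist(y_1,y_2)$, and the infinitesimal Bishop--Gromov monotonicity under $\mathrm{Ric}\ge 0$ gives $\mathcal{A}(r,\theta)\le (r/s)^{n-1}\mathcal{A}(s,\theta)\le 2^{n-1}\mathcal{A}(s,\theta)$: radii are only ever compared within a factor of $2$, so no singularity appears. Integrating in $r$ over $[0,2R]$, then over $y_1\in B(x,R)$, and adding the symmetric term yields your segment inequality with constant $O(2^{n-1})$; since the doubling ratio tends to $2^n$ at small scales on a compact $n$-manifold, $2^{n-1}\le c_X$, so the constant is $O(c_X)$ and the final bound $P_X\lesssim c_X$ is genuinely linear, with no delicate bookkeeping and no appeal to \cite{Buser82}. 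With that replacement your outline becomes a complete, self-contained proof; as written, the key lemma is asserted rather than proved.
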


\section{Eigenvalue multiplicity on doubling spaces}
\label{sec:eigen}

In this section, we prove the following.
\begin{theorem}\label{thm:main}
In both settings {\em ({\sf G})} and {\em ({\sf M})}, the multiplicity $m_k$ of the $k$th eigenvalue
of the Laplacian on $X$ satisfies
\begin{eqnarray*}
m_2 &\leq& c_X^{O(\log P_X + \log c_X)}, \quad\ \textrm{and} \\
m_k &\leq& c_X^{O(\log P_X + c_X \log k)} \quad\textrm{ for $k \geq 3$.}
\end{eqnarray*}

If in the setting {\em ({\sf G})}, $X$ is a Cayley graph, then for $k \geq 2$,
\begin{equation}\label{eq:groupmult}
m_k \leq \exp\left(O(\log c_X) (\log c_X + \log k)\right)
\end{equation}

If in the setting {\em ({\sf M})}, $X$ additionally has non-negative Ricci curvature,
then for $k \geq 2$,
\begin{equation}\label{eq:riccimult}
m_k \leq \exp\left(O(n^2 + n\log k)\right)
\end{equation}
\end{theorem}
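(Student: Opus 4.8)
The plan is to follow the Colding--Minicozzi/Kleiner strategy, adapted to eigenfunctions. Fix the eigenspace $W_k$ and suppose $\dim(W_k) = m$. The key quantity to control is the ``frequency'' or growth rate of functions in $W_k$: an eigenfunction $\f$ with $\Delta \f = \lambda_k \f$ (or $\Delta\f + \lambda_k \f = 0$) satisfies the Stokes identity \eqref{eq:stokes}, so on a ball $B(x,R)$ one expects $\int_{B(x,R)} |\grad \f|^2 \approx \lambda_k \int_{B(x,R)} \f^2$, and combined with the Poincar\'e inequality \eqref{eq:pinequality} this should give a reverse-type bound showing that the $L^2$ mass of $\f$ on $B(x,R)$ cannot be much larger than on $B(x,R/3)$ once $R$ is below the natural length scale $\lambda_k^{-1/2}$. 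Concretely, I would first pin down this scale: using the min-max test-function bound \eqref{eq:eigenbound} with bump functions supported on $c_X^{O(\log k)}$ disjointly packed balls (Fact~\ref{fact:mult} and Fact~\ref{fact:cover} guarantee enough of them inside a single ball), one gets an upper bound $\lambda_k \lesssim P_X c_X^{O(1)} k^{O(1)} / D^2$ where $D = \diam(X)$; this says $\lambda_k^{-1/2} \gtrsim D / (\sqrt{P_X}\, c_X^{O(1)} k^{O(1)})$, so there is a scale $R_0 \approx D/(\text{poly in } P_X, c_X, k)$ at which the reverse-Poincar\'e machinery applies.

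Next I would establish the \emph{reverse Poincar\'e inequality} for eigenfunctions at scale $R_0$: for $\f \in W_k$ and any $x$,
\begin{equation*}
\int_{B(x,R_0)} |\grad \f|^2 \, d\mu \;\lesssim\; \frac{1}{R_0^2} \int_{B(x,3R_0)} \f^2 \, d\mu,
\end{equation*}
which follows by the standard integration-by-parts argument (cutoff function times $\f$, moving the Laplacian onto $\f$, using the eigenvalue equation and $\lambda_k R_0^2 \lesssim 1$). Iterating the Poincar\'e inequality \eqref{eq:pinequality} together with this reverse bound over a chain of dyadic scales yields a \emph{doubling estimate for the eigenspace}: the $L^2$ mass of any $\f \in W_k$ on a ball grows at most polynomially (with degree a function of $\log c_X$, $\log P_X$, and in the $k\geq 3$ case an extra $c_X \log k$) as the radius increases. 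The crucial consequence is a \emph{volume-doubling-to-dimension} bound: if $\f_1,\dots,\f_m$ is an $L^2(B(x,R_0))$-orthonormal basis of $W_k|_{B(x,R_0)}$ (one must first check $\f \mapsto \f|_{B(x,R_0)}$ is injective on $W_k$, again via the reverse inequality and analytic continuation / unique continuation in the manifold case, and a separate argument in the graph case), then the ``trace function'' $\sum_i \f_i(y)^2$ controls everything, and a covering argument (Fact~\ref{fact:cover}) comparing the trace on $B(x,R_0)$ to its value on small sub-balls of radius $\eps R_0$, plus the doubling estimate, forces
\begin{equation*}
m \;\leq\; c_X^{O(\log P_X + \log c_X)} \quad (k=2), \qquad m \;\leq\; c_X^{O(\log P_X + c_X \log k)} \quad (k \geq 3).
\end{equation*}

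The two displayed corollaries \eqref{eq:groupmult} and \eqref{eq:riccimult} then follow by substitution. For \eqref{eq:groupmult}, in the Cayley graph case Theorem~\ref{thm:poingroup} gives $P_X \lesssim c_X^3$, so $\log P_X = O(\log c_X)$, and moreover the Cayley-graph structure should let one replace the $c_X \log k$ term by $\log k$ (since on a vertex-transitive graph the trace function $\sum_i \f_i^2$ is constant, so the scale-selection and covering steps become much cheaper --- this is the reason the $c_X$ factor drops); this yields $m_k \leq \exp(O(\log c_X)(\log c_X + \log k))$. For \eqref{eq:riccimult}, Theorem~\ref{thm:riccidouble} gives $c_X \leq 2^n$ and Theorem~\ref{thm:poinricci} gives $P_X \lesssim c_X \leq 2^n$, so $\log P_X = O(n)$ and $\log c_X = O(n)$; plugging into the $k \geq 3$ bound gives $m_k \leq c_X^{O(n + 2^n \log k)}$, which is too weak, so here one instead invokes Cheng's eigenvalue estimate \cite{Cheng75} to pin the scale $R_0$ far more efficiently (getting $\lambda_k^{-1/2} \gtrsim D/(\sqrt{n}\,k^{1/n})$ rather than losing a $c_X = 2^n$ factor), which replaces the $c_X \log k$ in the exponent by $O(\log k)$ and produces $m_k \leq 2^{O(n)(n + \log k)} = \exp(O(n^2 + n \log k))$.

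The main obstacle is the scale-selection / eigenvalue-calibration step: the whole argument hinges on knowing $\lambda_k R_0^2 \lesssim 1$ at a scale $R_0$ that is not too small relative to $\diam(X)$, because the dimension bound degrades with the number of dyadic scales between $R_0$ and $\diam(X)$. Getting the clean exponents --- in particular the $\log k$ rather than $c_X \log k$ in the Cayley and Ricci cases --- requires the sharp eigenvalue upper bounds (the careful bump-function packing in the graph case, Cheng's theorem in the manifold case) rather than the crude ones, and this is where the extra structural hypotheses (vertex-transitivity, non-negative Ricci curvature) are genuinely used.
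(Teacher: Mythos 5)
Your proposal reconstructs the heavy local Colding--Minicozzi/Kleiner machinery (reverse Poincar\'e at a scale $R_0\approx\lambda_k^{-1/2}$, dyadic iteration to a ``doubling estimate'' for eigenfunctions, restriction to a ball, trace function plus covering), and this is where it has genuine gaps. The step you flag yourself --- that $\f\mapsto \f|_{B(x,R_0)}$ is injective on $W_k$ --- is not a technicality: unique continuation fails for graph Laplacians (an eigenfunction can vanish identically on a large ball), and ``a separate argument in the graph case'' is exactly the missing content. Likewise, the passage from the trace function to $m\le c_X^{O(\log P_X+\log c_X)}$ is asserted rather than argued, and it is precisely where CM-type proofs do real work; their input is growth control over unboundedly many scales, which has no direct analogue on a compact space, so the ``doubling estimate for the eigenspace'' you invoke is neither formulated nor derived. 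None of this machinery is needed: the paper's proof works at the single scale $\diam(X)$. Cover $X$ by $M\le c_X^{O(\log(1/\delta))}$ balls of radius $\delta\,\diam(X)$ and map each $\f\in W_k$ to its vector of ball averages; if all averages vanish, then the Poincar\'e inequality \eqref{eq:pinequality} on each ball, the bounded overlap of the tripled balls (Fact~\ref{fact:mult}), and the global identity \eqref{eq:stokes} give $1\lesssim c_X^{O(1)}P_X(\delta\,\diam(X))^2\lambda_k$, which contradicts the eigenvalue upper bounds (Theorem~\ref{thm:eigen-doubling}, or Cheng's bound in the Ricci case) once $\delta$ is small enough; hence the averaging map is injective and $m_k\le M$. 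The reverse Poincar\'e inequality appears in the paper only as an aside, not in the proof.

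Your accounting of the improved cases is also off. You attribute the drop from $c_X\log k$ to $\log k$ in the Cayley case to constancy of the trace function under vertex transitivity; that observation is never turned into an estimate, and it is not the mechanism. The actual source is a sharper eigenvalue bound: under the homogeneity $\mu(B(x,R))=\mu(B(y,R))$, a ball of radius $\eps\,\diam(X)$ has measure $\lesssim\eps\,\mu(X)$ (Lemma~\ref{lem:meassym}), so the $k$ disjoint test sets of measure $\approx\mu(X)/k$ needed for \eqref{eq:eigenbound} exist already at scale $\diam(X)/O(k)$, giving \eqref{eq:sym}; under doubling alone one only has geometric decay of ball measure at rate $1-1/(2c_X)$ (Corollary~\ref{cor:smallmeas}), which forces the scale $\diam(X)e^{-O(c_X\log k)}$ and hence the $k^{O(c_X)}$ eigenvalue bound and the $c_X\log k$ term. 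Relatedly, the bound $\lambda_k\lesssim P_Xc_X^{O(1)}k^{O(1)}/\diam(X)^2$ that calibrates your scale $R_0$ is stronger than anything the paper proves under doubling alone, is not established by your parenthetical about packed balls (and an upper bound obtained from test functions should not involve $P_X$ at all); since your entire argument rests on it, it requires a proof --- and if it were proved, your own general-case exponent $c_X\log k$ would be inconsistent with it. Your use of Cheng's estimate in the non-negative Ricci case is the right idea and matches the paper.
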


We will require the following eigenvalue bounds.

\begin{theorem}[Cheng \cite{Cheng75}]
\label{thm:eigen-ricci}
In setting {\em ({\sf M})}, if $X$ also has non-negative Ricci curvature, then
the $k$th
eigenvalue of the Laplacian on $X$ satifies
$$
\lambda_k \lesssim \frac{k^2 n^2}{\diam(X)^2}.
$$
\end{theorem}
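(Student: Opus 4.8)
The plan is to exhibit $k$ pairwise disjoint balls in $X$, each supporting a test function of small Rayleigh quotient, and then invoke the min-max principle. Let $D = \diam(X)$ and fix $x,y \in X$ with $\dist(x,y)=D$, joined by a minimizing geodesic $\gamma : [0,D] \to X$. Set $r = \frac{D}{4k}$ and $p_i = \gamma\!\left(\frac{(2i-1)D}{2k}\right)$ for $i = 1,\ldots,k$. Since every subsegment of a minimizing geodesic is itself minimizing, $\dist(p_i,p_j) = \frac{|i-j|\,D}{k} \ge 4r$ whenever $i\neq j$, so the closed balls $B_i = B(p_i,r)$ are pairwise disjoint.

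On each $B_i$ let $u_i \in L^2_1(X)$ be the first Dirichlet eigenfunction of the open ball $B_i$, extended by zero to the rest of $X$, and write $\lambda_1^{\mathrm{Dir}}(B_i) = \frac{\int |\grad u_i|^2\,d\mu}{\int u_i^2\,d\mu}$ for the corresponding first Dirichlet eigenvalue. The functions $u_1,\ldots,u_k$ have mutually disjoint supports, so $\mathrm{span}(u_1,\ldots,u_k)$ is $k$-dimensional and, since all cross terms vanish, the Rayleigh quotient on this span never exceeds $\max_i \lambda_1^{\mathrm{Dir}}(B_i)$. By the Courant--Fischer min-max principle (cf.\ the discussion around \eqref{eq:eigenbound}), $\lambda_k \le \max_{1 \le i \le k} \lambda_1^{\mathrm{Dir}}(B_i)$. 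It therefore suffices to bound each $\lambda_1^{\mathrm{Dir}}(B_i)$, and this is the only step where non-negative Ricci curvature enters.

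By Cheng's eigenvalue comparison theorem \cite{Cheng75}, $\mathrm{Ric}_X \ge 0$ gives $\lambda_1^{\mathrm{Dir}}(B(p_i,r)) \le \lambda_1^{\mathrm{Dir}}(\tilde B_r)$, where $\tilde B_r \subset \R^n$ is a Euclidean $r$-ball; and $\lambda_1^{\mathrm{Dir}}(\tilde B_r) = j_{n/2-1,1}^2 / r^2$, where $j_{n/2-1,1}$, the first positive zero of the Bessel function $J_{n/2-1}$, is of order $O(n)$. Hence $\lambda_1^{\mathrm{Dir}}(B_i) = O(n^2/r^2) = O(n^2 k^2 / D^2)$, which combined with the previous paragraph proves the theorem. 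To keep the argument self-contained one may instead take $u_i$ to be an explicit radial cutoff $z \mapsto \psi(\dist(z,p_i))$ and bound its Rayleigh quotient directly from the Bishop--Gromov monotonicity of $s \mapsto \mu(B(p_i,s))/s^n$; this is elementary but lossy, and even a crude tent function already yields $\lambda_1^{\mathrm{Dir}}(B_i) = O(c_X/r^2)$ -- enough for every use of this theorem in the present paper, at the cost of replacing $n^2$ by $2^n$.

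I do not expect a genuine obstacle once Cheng's Dirichlet-eigenvalue comparison is granted: the only real content is that comparison itself, proved by pulling the model eigenfunction back along the distance function $\dist(\cdot,p_i)$ and invoking the Laplacian comparison inequality for $\dist(\cdot,p_i)$ (valid because $\mathrm{Ric}_X \ge 0$) together with an ODE comparison argument. Since the weaker $2^n$-type constant can be obtained without it, the only real question is whether one insists on the sharp dimensional dependence $n^2$.
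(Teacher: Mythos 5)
This statement is imported by the paper as a black box: the authors cite Cheng \cite{Cheng75} and only remark that the proof goes ``via comparison to a model space of constant sectional curvature''; no proof appears in the text. What you have written is, in substance, Cheng's own argument, and it is correct: place $k$ pairwise disjoint balls of radius $r = D/(4k)$ along a diameter-realizing geodesic, take the first Dirichlet eigenfunctions extended by zero as disjointly supported test functions, apply min--max on their $k$-dimensional span (your unnormalized form $\lambda_k \le \max_i \int |\grad u_i|^2 / \int u_i^2$ is valid with the paper's indexing, where $\lambda_1 = 0$ counts as the first eigenvalue; it is just the version of \eqref{eq:eigenbound} without subtracting means), and control each $\lambda_1^{\mathrm{Dir}}(B(p_i,r))$ by the Euclidean model via Cheng's Dirichlet eigenvalue comparison, giving $j_{n/2-1,1}^2/r^2 = O(n^2/r^2)$. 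The one step carrying real analytic content --- the comparison $\lambda_1^{\mathrm{Dir}}(B(p,r)) \le \lambda_1^{\mathrm{Dir}}(\tilde B_r)$ under $\mathrm{Ric} \ge 0$ --- is still being cited rather than proved, so your write-up does not make the theorem self-contained; it reconstructs the reduction around the citation, which is consistent with how the paper uses the result. Your closing observation is also accurate and worth recording: the elementary tent-function bound $\lambda_1^{\mathrm{Dir}}(B_i) \lesssim c_X/r^2$ obtained from Bishop--Gromov gives $\lambda_k \lesssim c_X k^2/\diam(X)^2$ with $c_X \le 2^n$, and since in the proof of Theorem \ref{thm:main} only $\log \lambda_k$ enters the choice of $\delta$, this weaker bound still yields the multiplicity estimate \eqref{eq:riccimult} of the form $\exp(O(n^2 + n\log k))$; the sharp $n^2$ constant in Cheng's theorem is therefore not essential for the paper's purposes.
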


Cheng's result is proved via comparison to a model space of constant sectional
curvature.  In general, we can prove a weaker
bound under just a doubling assumption.  The proof is deferred to Section
\ref{sec:eigenbounds}.

\begin{theorem}
\label{thm:eigen-doubling}
In both settings {\em ({\sf G})} and {\em ({\sf M})}, the following is true.
The $k$th eigenvalue of the Laplacian on $X$ satisfies
\begin{eqnarray*}
\lambda_2 &\leq& \frac{c_X^{O(1)}}{\diam(X)^2}, \quad\textrm{and} \\
\lambda_k &\leq& \frac{k^{O(c_X)}}{\diam(X)^2} \quad\textrm{ for $k \geq 3$.}
\end{eqnarray*}
If, in addition, for every $x,y \in X$ and $R \geq 0$, we have $\mu(B(x,R)) = \mu(B(y,R))$, then
one obtains the estimate
\begin{equation}\label{eq:sym}
\lambda_k \lesssim \frac{k^2 (\log c_X)^2}{\diam(X)^2},
\end{equation}
for all $k \geq 2$.
\end{theorem}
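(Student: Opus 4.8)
\noindent The plan is to prove every one of these estimates from the variational bound \eqref{eq:eigenbound}, by exhibiting test functions with mutually disjoint supports and controlling their Rayleigh quotients. The basic building block is a \emph{bump}: given $y\in X$ and radii $r<\Lambda r$ (with $\Lambda r\in\mathbb N$ in setting ({\sf G})), take $f$ with $f\equiv1$ on $B(y,r)$, $f\equiv0$ off $B(y,\Lambda r)$, and $f$ being $O(1/r)$-Lipschitz in between (in ({\sf G}) interpolate linearly in the graph distance). Then $\grad f$ is supported in $B(y,\Lambda r)$ with $|\grad f|\lesssim1/r$, so $\int|\grad f|^2\,d\mu\lesssim\mu(B(y,\Lambda r))/r^2$, while $\int(f-\bar f)^2\,d\mu=\int f^2\,d\mu-\big(\int f\,d\mu\big)^2/\mu(X)\ge\tfrac12\mu(B(y,r))$ whenever $\mu(B(y,\Lambda r))\le\tfrac12\mu(X)$ (Cauchy--Schwarz). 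Hence $f$ has Rayleigh quotient $\lesssim c_X^{O(\log\Lambda)}/r^2$, by doubling. We may assume $\diam(X)=:D$ exceeds an absolute constant, since otherwise $\mu(X)$ and the norm of the Laplacian are both $c_X^{O(1)}$ and every claimed inequality is trivial. For $\lambda_2$: pick $x_1,x_2$ with $\dist(x_1,x_2)=D$; the disjoint balls $B(x_1,D/5)$, $B(x_2,D/5)$ cannot both have measure $>\tfrac12\mu(X)$, so one of them, say $B(x_1,D/5)$, does not, and a single bump centered at $x_1$ (which is all one needs in \eqref{eq:eigenbound} for $k=2$) has Rayleigh quotient $\lesssim c_X/D^2$, giving $\lambda_2\le c_X^{O(1)}/\diam(X)^2$.

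\smallskip
\noindent For $\lambda_k$, $k\ge3$, I would first produce $N:=k+\lceil3c_X\rceil$ disjoint balls of radius $\rho/2$ with $\rho\asymp D/N$. This is immediate from a diameter-realizing shortest path (resp.\ minimizing geodesic): its points at arclengths $0,\rho,2\rho,\dots$ from one endpoint are pairwise at distance $\ge\rho$ in $X$, so the $\rho/2$-balls about them are disjoint. (In ({\sf G}), if $D<N$ then $\mu(X)\le c_X^{O(\log N)}$ and the claim is again trivial; so assume $D\ge N$ and take $\rho=\lfloor D/N\rfloor\ge1$.) Among these $N$ balls, fewer than $\lceil3c_X\rceil$ can have measure $>\mu(X)/(3c_X)$ (they are disjoint inside total mass $\mu(X)$); discard those, keeping at least $k$ ``light'' balls. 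On each light ball a bump with $r=\rho/4$, $\Lambda=2$ satisfies all the hypotheses above and has Rayleigh quotient $\lesssim c_X/\rho^2\asymp c_XN^2/D^2$, so \eqref{eq:eigenbound} yields $\lambda_k\lesssim c_X(k+c_X)^2/\diam(X)^2\le k^{O(c_X)}/\diam(X)^2$.

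\smallskip
\noindent The sharp estimate \eqref{eq:sym} exploits the homogeneity hypothesis $\mu(B(x,R))=\mu(B(y,R))=:V(R)$ in two ways. First, with $N:=k$ disjoint balls of radius $\rho/2$, $\rho\asymp D/k$, disjointness alone forces $V(\rho/2)\le\mu(X)/k\le\tfrac12\mu(X)$, so \emph{every} ball is light and nothing is discarded. Second --- this is the new ingredient --- on the $i$-th ball I would replace the crude bump by a radial function $f_i(z)=\phi(\dist(z,y_i))$, where the profile $\phi\colon[0,\rho/2]\to[0,1]$ ($\phi\equiv1$ near $0$, $\phi(\rho/2)=0$) is chosen to minimize $\int_0^{\rho/2}\phi'(t)^2\,dV(t)/\int_0^{\rho/2}\phi(t)^2\,dV(t)$, i.e.\ a discrete analogue of the ground state of the weighted one-dimensional Laplacian with density $dV$ on $[0,\rho/2]$, Neumann at $0$ and Dirichlet at $\rho/2$. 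Since $f_i$ is radial, $\int|\grad f_i|^2\,d\mu\lesssim\int\phi'^2\,dV$ and $\int f_i^2\,d\mu=\int\phi^2\,dV$. Spreading the transition of $\phi$ over the $\Theta(\log c_X)$ dyadic scales inside $(0,\rho/2)$, one expects this minimal ratio to be $O\big((\log c_X)^2\big)/\rho^2$ --- the order of the first Dirichlet eigenvalue of a Euclidean ball of radius $\rho/2$ and dimension $\log c_X$, whose ground state lives in a boundary layer of relative width $\asymp1/\log c_X$ --- rather than the $O(c_X)/\rho^2$ of a linear bump. Granting this and using $\int(f_i-\bar f_i)^2\ge\tfrac12\int f_i^2$ (again because $\mu$ of $\operatorname{supp}f_i\le\tfrac12\mu(X)$), \eqref{eq:eigenbound} gives $\lambda_k\lesssim(\log c_X)^2k^2/\diam(X)^2$.

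\smallskip
\noindent\textbf{The main obstacle.} Everything above is routine except the last point: the claim that on a half-interval of length $r$ carrying an arbitrary non-decreasing weight $V$ that is merely \emph{doubling} ($V(2t)\le c_XV(t)$), there is a cutoff from $1$ to $0$ whose weighted Rayleigh quotient is $O\big((\log c_X)^2\big)/r^2$. This is a Hardy/Muckenhoupt-type problem on the line with a doubling weight, and the real content is that the unavoidable loss is the \emph{squared doubling dimension} $(\log c_X)^2$ and not $c_X$: a linear or logarithmic cutoff only achieves the latter, so one must use the $V$-adapted ground state and exploit the doubling of $V$ at \emph{every} scale at once. Carrying this out when $V$ is highly irregular --- in particular in setting ({\sf G}), where $dV$ is atomic, supported on the integers, with consecutive sphere sizes differing by factors as large as the degree --- is the technically delicate step; the continuous--discrete translation ($\pm1$ shifts in distances, atoms of $dV$) is a further, purely bookkeeping nuisance affecting only constants.
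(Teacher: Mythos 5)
Your bounds for $\lambda_2$ and for $\lambda_k$, $k\ge 3$, are correct, and your route is genuinely different from the paper's. The paper builds a padded random partition of $X$ at scale $\tau$ (Lemma \ref{lem:pad}), uses measure-decay estimates (Lemma \ref{lem:measlem}, Corollary \ref{cor:smallmeas}, Lemma \ref{lem:meassym}) to force every piece to have measure at most $\mu(X)/(8k)$, regroups pieces into $k$ sets of measure $\approx \mu(X)/k$, and tests with $\dist(\cdot,X\setminus S_i)$; the exponential factor $k^{O(c_X)}$ is exactly the price of Corollary \ref{cor:smallmeas}, which in general only permits $\tau \ge \diam(X)\,e^{-O(c_X\log k)}$. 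You instead place disjoint balls along a diameter-realizing geodesic and use local bumps, comparing $\int|\grad f_i|^2$ with the measure of a concentric ball of comparable radius by a single application of doubling; nothing global is needed beyond $\mu(\operatorname{supp}f_i)\le\tfrac12\mu(X)$, so your construction in fact yields the polynomial bound $\lambda_k\lesssim c_X(k+c_X)^2/\diam(X)^2$, stronger than the stated $k^{O(c_X)}$. The discretization issues you wave at (closed half-radius balls touching, the one-step enlargement of gradient supports in ({\sf G}), small-diameter degenerate cases) are indeed routine; note only that your justification for assuming $\diam(X)\gtrsim 1$ via the norm of the Laplacian makes sense in ({\sf G}) but not in ({\sf M}) --- in ({\sf M}) no such assumption is needed.

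For \eqref{eq:sym}, however, there is a genuine gap, and you flag it yourself: the whole estimate rests on the claim that for a weight $V(t)=\mu(B(y,t))$ satisfying only $V(2t)\le c_X V(t)$ there is a cutoff on $[0,\rho/2]$ with weighted Rayleigh quotient $O\bigl((\log c_X)^2\bigr)/\rho^2$, and you offer this only as what ``one expects'' from a $V$-adapted ground state, with Hardy/Muckenhoupt analysis as the delicate step. As written, \eqref{eq:sym} is therefore not proved. The claim is true, though, and much easier than you fear --- no ground state is needed, and the loss is not intrinsic to linear cutoffs, only to a cutoff across the full annulus. Pigeonhole over $m=\lceil\log_2 c_X\rceil$ equal annuli: set $t_j=\tfrac{\rho}{4}\bigl(1+j/m\bigr)$ for $j=0,\dots,m$; since $V(t_m)=V(\rho/2)\le c_X V(\rho/4)\le 2^m V(t_0)$, some $j$ has $V(t_{j+1})\le 2V(t_j)$, and the linear cutoff equal to $1$ on $B(y,t_j)$ and vanishing off $B(y,t_{j+1})$ satisfies $\int|\grad f|^2\,d\mu\le (4m/\rho)^2\bigl(V(t_{j+1})-V(t_j)\bigr)\le (4m/\rho)^2 V(t_j)$ against $\int f^2\,d\mu\ge V(t_j)$, with your mean-subtraction step unchanged; this gives Rayleigh quotient $O\bigl((\log c_X)^2\bigr)/\rho^2$. (This pigeonhole over $\approx\log c_X$ shells is the same mechanism by which the paper's partition lemma earns its $\tau/(A(1+\log c_X))$ padding.) With this fix your argument closes, and it no longer uses the homogeneity hypothesis at all: you invoked it only to make all $k$ balls light, but among $k+1$ disjoint balls at most one has measure exceeding $\tfrac12\mu(X)$. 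So your approach, completed this way, gives $\lambda_k\lesssim k^2(\log c_X)^2/\diam(X)^2$ unconditionally --- stronger than the theorem as stated, whose case split reflects the partition-based method rather than any real obstruction.
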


We proceed to the proof of the theorem.

\remove{
\bigskip

[Instead, let's break $V$ into $O(k)$ partitions
$P_1, P_2, \ldots, P_m$ so that each partition
is $O(\delta D)$-bounded, but also so that every $3B_j$ ball is contained in some piece.

Then we estimate
$$
\sum_{x \in B_j} |f(x)-\Phi_j(f)|^2 \leq O(d\delta D)^2 C \sum_{x \in 3B_j} \sum_{y \sim x} |f(x)-f(y)|^2
\leq O(d\delta D)^2 \sum_{x \in \mathrm{piece}(3B_j)} \sum_{y \sim x} |f(x)-f(y)|^2.
$$
}

\begin{proof}[Proof of Theorem \ref{thm:main}]
Let $D = \diam(X)$, and let $\mathcal B = \{B_1, B_2, \ldots, B_M\}$ be a cover of $X$
of minimal size by balls of radius $\delta D$, for some $\delta > 0$ to be chosen later.
By the doubling property (and Fact \ref{fact:cover}), we have $M \leq c_X^{O(\log(\delta^{-1}))}.$

Let $W_k$ be the $k$th eigenspace of the Laplacian, and define
the linear map $\Phi : W_k \to \mathbb R^{M}$ by $\Phi_j(\f) = \frac{1}{\mu(B_j)} \int_{B_j} \f\,d\mu$.
Our goal will be to show that for $\delta > 0$ small enough, $\Phi$ is injective,
and thus $\dim(W_k) \leq M$.

\begin{lemma}\label{lem:main}
If $\f : X \to \mathbb R$ is a non-zero eigenfunction of the Laplacian with eigenvalue $\lambda\neq 0$,
and $\Phi(\f) = 0$, then
$$
\lambda^{-1} \lesssim c_X^{O(1)} P_X (\delta D)^{2}.
$$
\end{lemma}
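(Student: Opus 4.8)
The plan is to exploit the hypothesis $\Phi(\f)=0$, which says precisely that the eigenfunction $\f$ has vanishing mean on each ball $B_j = B(x_j,\delta D)$ of the cover $\mathcal B$. Consequently the Poincar\'e inequality \eqref{eq:pinequality}, applied on each $B_j$ with center $x_j$ and radius $R = \delta D$, can be used with the subtracted average $\bar\f_R = \Phi_j(\f) = 0$. Summing these local inequalities over $j$ and using that the $B_j$ cover $X$ will bound $\int_X \f^2\,d\mu$ by $c_X^{O(1)} P_X (\delta D)^2 \int_X |\grad\f|^2\,d\mu$; since $\f$ is an eigenfunction, \eqref{eq:stokes} rewrites $\int_X |\grad\f|^2\,d\mu$ as $\lambda \int_X \f^2\,d\mu$, and cancelling $\int_X \f^2\,d\mu > 0$ (legitimate since $\f \neq 0$) yields $1 \lesssim c_X^{O(1)} P_X (\delta D)^2 \lambda$, which is the claim.

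In more detail: for each $j$, inequality \eqref{eq:pinequality} with $\bar\f_R = \Phi_j(\f) = 0$ gives $\int_{B_j}\f^2\,d\mu \leq P_X (\delta D)^2 \int_{3B_j} |\grad\f|^2\,d\mu$. Since $\{B_j\}_{j=1}^M$ covers $X$,
\[
\int_X \f^2\,d\mu \;\leq\; \sum_{j=1}^{M}\int_{B_j}\f^2\,d\mu \;\leq\; P_X(\delta D)^2 \sum_{j=1}^{M}\int_{3B_j}|\grad\f|^2\,d\mu \;\leq\; N\,P_X(\delta D)^2 \int_X |\grad\f|^2\,d\mu ,
\]
where $N = \sup_{z\in X}\#\{\, j : z \in 3B_j \,\}$ is the intersection multiplicity of the dilated family $3\mathcal B = \{3B_1,\dots,3B_M\}$. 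Combining this with \eqref{eq:stokes} and dividing through by $\int_X \f^2\,d\mu$ gives $\lambda^{-1} \leq N\,P_X(\delta D)^2$.

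It remains to check that $N \leq c_X^{O(1)}$, and this is the only genuinely fiddly point of the argument. If $z \in 3B_j$ then $x_j \in B(z,3\delta D)$, so all of the relevant centers lie inside a single ball of radius $3\delta D$; since $\mathcal B$ has minimal size (hence is irredundant) these centers cannot cluster, and a standard packing estimate from the doubling property — in the spirit of Fact \ref{fact:mult} — bounds their number by $c_X^{O(1)}$. (Alternatively, one may simply arrange $\mathcal B$ to be the cover induced by a maximal $(\delta D)$-separated subset of $X$: this keeps $M \leq c_X^{O(\log\delta^{-1})}$ and makes the bound on $N$ immediate, since then the balls $\{B(x_j,\delta D/2)\}$ are pairwise disjoint and Fact \ref{fact:mult}, applied to a constant dilate of them, controls the overlap of $3\mathcal B$.) Note that nothing above distinguishes setting {\sf G} from {\sf M}: the identity \eqref{eq:stokes}, the Poincar\'e inequality \eqref{eq:pinequality}, the doubling property, and Fact \ref{fact:mult} hold verbatim in both, and the sign convention in the definition of $W_k$ is irrelevant since we only interact with $\Delta$ through \eqref{eq:stokes}.
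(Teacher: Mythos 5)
Your proof is correct and follows essentially the same route as the paper's: apply the Poincar\'e inequality on each $B_j$ with $\bar\f_{\delta D}=\Phi_j(\f)=0$, sum over the cover, control the overlap of $3\mathcal B$ by $c_X^{O(1)}$ via Fact \ref{fact:mult}, and close the loop with \eqref{eq:stokes}. Your extra remark about taking $\mathcal B$ to come from a maximal $(\delta D)$-separated set so that Fact \ref{fact:mult} (stated for disjoint balls) applies cleanly is a small point the paper leaves implicit, but it does not change the argument.
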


\begin{proof}
Using $\Phi_j(\f) = 0$ for every $j \in [M]$, and the Poincar\'e inequality
\eqref{eq:pinequality}, we write
\[
\int \f^2\,d\mu \leq \sum_{j=1}^M \int_{B_j} \f^2\,d\mu
\lesssim P_X (\delta D)^2 \sum_{j=1}^M \int_{3B_j} |\grad \f|^2\,d\mu.
\]
Also,
\[
\sum_{j=1}^M \int_{3B_j} |\grad \f|^2\,d\mu \leq \mathcal M(3\mathcal B) \int |\grad \f|^2\,d\mu,
\]
where $\mathcal M(3\mathcal B) = \max_{x \in V} \# \{ j \in [M] : x \in 3B_j \} \leq c_X^{O(1)}$ is the intersection
multiplicity of $3 \mathcal B$ (by Fact \ref{fact:mult}).  Combining these two inequalities and using \eqref{eq:stokes} yields
\[
\int \f^2\,d\mu \lesssim c_X^{O(1)} P_X (\delta D)^2 \int |\grad \f|^2\,d\mu
\lesssim c_X^{O(1)} P_X (\delta D)^2 \lambda \int \f^2\,d\mu
\]
which gives the desired conclusion.
\end{proof}

Now suppose that $\f \in W_k$ and $\Phi(\f) = 0$.
If $\f \neq 0$, then by Lemma \ref{lem:main}, we have
$$
\lambda_k \gtrsim \frac{c_X^{-O(1)} P_X^{-1}}{\delta^2\, \diam(X)^2}.
$$
Choosing $\delta > 0$ small enough contradicts Theorem \ref{thm:eigen-ricci} or
Theorem \ref{thm:eigen-doubling}, depending upon the assumption.
It follows that $\dim(W_k) \leq M \leq c_X^{O(\log(\delta^{-1}))}$, yielding
the desired bounds.

To prove \eqref{eq:riccimult}, use Theorem \ref{thm:eigen-ricci}, and observe that $P_X \lesssim c_X$
by Theorem \ref{thm:poinricci}.
To obtain \eqref{eq:groupmult}, observe that $P_X \lesssim c_X^3$, by Theorem \ref{thm:poingroup},
and the condition of the eigenvalue estimate \eqref{eq:sym} is satisfied when $X$ is
a Cayley graph (indeed, for any vertex-transitive graph).
\end{proof}

\remove{
\section{Multiplicity of eigenvalues on manifolds of Ricci curvature bounded below}

We seek to prove the following theorem.

\begin{theorem}
Let $M$ be an $n$-dimensional compact Riemannian manifold without boundary, let $d = \diam(M)$, and suppose
that $(\mathrm{Ricc})d^2 \geq - a^2$, where $\mathrm{Ricc}$ is a lower bound for the
Ricci curvature on $M$.  Then,
$$
m_k(M) \leq C(a,n,k),
$$
where $m_k$ is the multiplicity of the $k$th eigenvalue, and
$C(a,n,k)$ is a constant depending only on $a$, $n$, and $k$.
\end{theorem}

First, we need a refined version of the Reverse Poincar\'e inequality.

\begin{lemma}
Let $u$ be an eigenfunction of the Laplace operator with
eigenvalue $\lambda$.  Consider a ball $B(R)$, then
$$
\int_{B(R)} |\grad u|^2\, dV \leq \left(\frac{16}{R^2} + 2 \lambda\right) \int_{B(2R)} u^2\,dV.
$$
\end{lemma}

We also a regular Poincar\'e inequality.

\begin{lemma}
For any $f \in W_{\mathrm{loc}}^{1,2}(M)$ [see CM-Annals],
$$
\int_{B(R)} (f-\bar f)^2\,dV \leq C r^2 \int_{B(R)} |\grad f|^2\, dV
$$
\end{lemma}
}

\subsection{Aside:  A Reverse Poincar\'e Inequality for graphs}
\label{sec:reverse}

In the approaches of Colding and Minicozzi \cite{CM97} and Kleiner \cite{Kleiner07},
one also needs a ``reverse Poincar\'e inequality'' to control harmonic function on
balls, while in the preceding proof we only need control of an eigenfunction
on the entire graph (for which we could use \eqref{eq:stokes}).
We observe the following (perhaps known) version for eigenfunctions on graphs.
An analogous statement holds in setting {\sf (M)}.

\begin{theorem}\label{thm:reverse}
Suppose we are in the graph setting {\sf (G)}.
Let $\f : X \to \mathbb R$ be an eigenfunction of the Laplace operator with eigenvalue
$\lambda$. Then,
$$\int_{B(R)} |\grad \f|^2 \,d\mu \leq \left(\frac{128}{d R^2} + 2 \lambda\right) \int_{B(2R)} \f^2\,d\mu.$$
\end{theorem}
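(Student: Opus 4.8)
The plan is to mimic the classical Caccioppoli-type argument: multiply the eigenvalue equation $\Delta\f=\lambda\f$ by $\eta^2\f$, where $\eta$ is a suitable cutoff function supported on $B(2R)$, and sum over the graph. Concretely, I would choose $\eta:X\to[0,1]$ with $\eta\equiv 1$ on $B(R)$, $\eta\equiv 0$ outside $B(2R)$, and $|\eta(x)-\eta(y)|\le 1/R$ whenever $x\sim y$ (this is just the truncated, rescaled distance to $B(R)$, clipped to $[0,1]$, which is $1/R$-Lipschitz on the graph since adjacent vertices differ by at most $1$ in distance). Then I would expand the ``integration by parts'' identity on graphs: using self-adjointness of $\Delta$, one has $\sum_x \Delta\f(x)\cdot\eta(x)^2\f(x) = \frac{1}{2d}\sum_{x\sim y}(\f(x)-\f(y))\big(\eta(x)^2\f(x)-\eta(y)^2\f(y)\big)$, while the left side equals $\lambda\sum_x \eta(x)^2\f(x)^2$.

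The key algebraic step is to bound the edge sum from below by something like $\frac12\sum_{x\sim y}\eta(x)^2(\f(x)-\f(y))^2$ minus an error term involving $\sum_{x\sim y}(\eta(x)-\eta(y))^2\f(x)^2$ (or the analogous symmetric version). The standard trick: write $\eta(x)^2\f(x)-\eta(y)^2\f(y)$ and split using $a^2u-b^2v = \frac{(a^2+b^2)}{2}(u-v) + \frac{(a^2-b^2)}{2}(u+v)$ with $a=\eta(x),b=\eta(y),u=\f(x),v=\f(y)$, then apply Cauchy--Schwarz / Young's inequality on the cross term $(\eta(x)^2-\eta(y)^2)(\f(x)+\f(y))(\f(x)-\f(y))$, using $|\eta(x)^2-\eta(y)^2|=|\eta(x)-\eta(y)||\eta(x)+\eta(y)|\le \frac{1}{R}\cdot 2$ on the support. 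Absorbing the $\frac12\eta^2(\f(x)-\f(y))^2$ term to the left and bounding everything crudely should yield $\frac{1}{2d}\sum_{x\sim y}\eta(x)^2(\f(x)-\f(y))^2 \le \big(\text{const}/(dR^2)+2\lambda\big)\sum_{x}\eta(x)^2\f(x)^2$ after tracking the constants carefully; the precise constant $128$ comes out of how one distributes the Young's-inequality parameters. Finally, since $\eta\equiv 1$ on $B(R)$ the left side dominates $\int_{B(R)}|\grad\f|^2\,d\mu$ (with the $\frac{1}{2d}$ normalization matching the definition of $|\grad\f|^2$), and since $\eta$ is supported in $B(2R)$ and bounded by $1$, the right side is at most $\big(128/(dR^2)+2\lambda\big)\int_{B(2R)}\f^2\,d\mu$.

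I expect the main obstacle to be purely bookkeeping: getting the constant right and making sure the crude bounds on the cutoff-error term (in particular the fact that on an edge $\{x,y\}$ with both endpoints in $B(2R)$ one has $\f(x)^2+\f(y)^2\le 2\max(\f(x)^2,\f(y)^2)$, and controlling $|\eta(x)+\eta(y)|\le 2$, $|\eta(x)-\eta(y)|\le 1/R$) all point in the same direction, so that no term has the wrong sign. A minor subtlety is boundary edges — edges with one endpoint in $B(2R)$ and one outside — but because $\eta$ vanishes outside $B(2R)$, those edges contribute exactly as if $\eta$ were genuinely zero there, so no separate treatment is needed as long as the Lipschitz bound $|\eta(x)-\eta(y)|\le 1/R$ is maintained across such edges (which it is, since $\eta$ on the outside vertex is $0$ and on the inside vertex is at most the distance-based value). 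Once the cutoff is fixed, the argument is a few lines of Cauchy--Schwarz, and the only real choice is the Young's inequality split that produces the factor $128$ rather than some other universal constant.
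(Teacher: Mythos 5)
Your proposal is correct and follows essentially the same route as the paper: the paper also proves a Caccioppoli-type inequality by pairing the eigenvalue equation with $u^2\f$ for a cutoff $u$ that is $1$ on $B(R)$, decays linearly to $0$ on $B(2R)\setminus B(R)$ (hence is $1/R$-Lipschitz), performing the discrete summation by parts, and controlling the cross term $\sum_{x\sim y}(u(y)^2-u(x)^2)\f(x)^2$ by Cauchy--Schwarz before absorbing it. The only cosmetic difference is that the paper isolates the estimate as a separate lemma for a general nonnegative cutoff and closes the absorption step by a case split on the size of the gradient sum rather than an explicit Young's inequality, but this is the same argument with the same constants' provenance.
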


The proof is based on the following lemma.

\begin{lemma}
\label{lem:bound}
Let $\f : X \to \mathbb R$ be an eigenfunction of the Laplace operator with eigenvalue
$\lambda$. Let $u : X \to \mathbb R$ be a non-negative function that vanishes off $B(R-1)$, then
$$\int_{B(R)} u^2 |\grad \f|^2\,d\mu \leq  \frac{128}{d} \int_{B(R)} \f^2 |\grad u|^2 \,d\mu+ 2\lambda  \|u\|_{\infty}^2 \int_{B(R)} \f^2\,d\mu.$$
\end{lemma}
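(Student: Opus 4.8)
The plan is to prove Lemma~\ref{lem:bound} by the standard integration-by-parts (or, in the graph setting, summation-by-parts) trick used for Caccioppoli-type estimates, and then derive Theorem~\ref{thm:reverse} by choosing $u$ to be an explicit cutoff function.

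\medskip
\noindent
\textbf{Proof of Lemma~\ref{lem:bound} (plan).}
First I would write the ``energy'' we want to bound as a discrete Dirichlet-type form tested against $u^2 \f$. Concretely, since $\f$ is an eigenfunction, $\Delta \f = \lambda \f$, which in the graph setting means $\frac1d \sum_{y\sim x}(\f(x)-\f(y)) = \lambda \f(x)$. The natural identity is
\[
\sum_{x} u(x)^2 \f(x) \cdot [\Delta \f](x) = \lambda \sum_x u(x)^2 \f(x)^2,
\]
and I would rewrite the left-hand side by summation by parts, moving one ``difference'' off $\f$ and onto the test function $u^2\f$. This produces a sum over edges $\{x,y\}$ of $(\f(x)-\f(y))\bigl(u(x)^2\f(x) - u(y)^2\f(y)\bigr)$, up to the $\frac1{2d}$ normalization built into $|\grad\cdot|^2$. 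The key algebraic step is the splitting
\[
u(x)^2\f(x) - u(y)^2\f(y) = u(x)^2\bigl(\f(x)-\f(y)\bigr) + \f(y)\bigl(u(x)^2 - u(y)^2\bigr),
\]
and then $u(x)^2-u(y)^2 = (u(x)-u(y))(u(x)+u(y))$. The first term yields $\sum_{\{x,y\}} u(x)^2(\f(x)-\f(y))^2$, which is comparable to (a one-sided version of) $\int u^2|\grad\f|^2$; the second term is a cross term that I would bound using the elementary inequality $ab \le \varepsilon a^2 + \frac1{4\varepsilon}b^2$ (Cauchy--Schwarz with a weight), choosing the weight so that the resulting $u^2(\f(x)-\f(y))^2$-type term can be absorbed into the left-hand side, leaving the $\f^2(u(x)-u(y))^2$-type term on the right with a constant like $\frac{128}{d}$ after bookkeeping. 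Because $u$ vanishes off $B(R-1)$, every edge with a nonzero contribution has both endpoints in $B(R)$, so all sums can be restricted to $B(R)$, and the final term $\f(y)$ gets replaced by a bound involving $\|u\|_\infty^2 \int_{B(R)}\f^2$ in the places where it cannot be paired with a $u$-difference. I expect the main obstacle to be getting the constants and the edge/vertex normalizations exactly right so that the absorption works and one lands on the stated constant $128/d$; the inequality $2ab\le \tfrac12 a^2 + 2b^2$ iterated a couple of times (to handle both the $u(x)+u(y)$ factor, bounded by $2\max(u(x),u(y))$, and the square) naturally produces a power of $2$ of roughly that size, and one must be careful that the $|\grad u|^2$ on the right also carries the $\frac1{2d}$ normalization, which is what lets the factor of $d$ appear.

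\medskip
\noindent
\textbf{Proof of Theorem~\ref{thm:reverse} (plan).}
Given Lemma~\ref{lem:bound}, I would choose the cutoff $u$ to be the piecewise-linear function of the distance to $e$ that equals $1$ on $B(R)$, decays linearly to $0$ between radius $R$ and radius $2R$, i.e. $u(x) = \min\{1,\ \max\{0,\ 2 - \dist(x,e)/R\}\}$, rescaled appropriately so that it vanishes off $B(2R-1)$; applying the lemma with $2R$ in place of $R$. Then $\|u\|_\infty = 1$, $u \equiv 1$ on $B(R)$ so $\int_{B(R)}|\grad\f|^2 \le \int_{B(2R)} u^2|\grad\f|^2$, and for any edge $\{x,y\}$ we have $|u(x)-u(y)| \le 1/R$, hence $|\grad u(x)|^2 = \frac1{2d}\sum_{y\sim x}|u(x)-u(y)|^2 \le \frac{d}{2d R^2} = \frac{1}{2R^2}$. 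Plugging these into the lemma gives
\[
\int_{B(R)} |\grad\f|^2\,d\mu \le \frac{128}{d}\cdot \frac{1}{2R^2}\int_{B(2R)}\f^2\,d\mu + 2\lambda\int_{B(2R)}\f^2\,d\mu = \left(\frac{64}{dR^2} + 2\lambda\right)\int_{B(2R)}\f^2\,d\mu,
\]
which is even slightly stronger than the stated $\frac{128}{dR^2}$; I would just keep the cleaner constant $128$ to absorb any slack from the exact form of the cutoff and the fact that $u$ must vanish strictly before radius $2R$ (using, say, $u(x) = \min\{1,\max\{0, \frac{2R - \dist(x,e)}{R}\}\}$ and noting one loses at most a small constant factor in $|\grad u|$). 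The only subtlety here is matching the ``vanishes off $B(R-1)$'' hypothesis of the lemma with the desired radii, which is handled by a harmless reindexing $R \mapsto 2R$ and checking that the linear cutoff indeed vanishes on the complement of the required ball.
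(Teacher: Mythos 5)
Your plan is correct and follows essentially the same route as the paper: both are the standard discrete Caccioppoli argument, in which one pairs the eigenvalue identity with the test function $u^2\f$, performs summation by parts over edges, splits off the cross term $\f(u(x)^2-u(y)^2)(\f(x)-\f(y))$, and absorbs it via weighted Cauchy--Schwarz; the paper merely runs the identity in the opposite direction, expanding $\int u^2|\grad\f|^2$ to expose the $\lambda\int u^2\f^2$ term plus the same cross term. The cutoff choice in your derivation of Theorem~\ref{thm:reverse} matches the paper's as well.
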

\begin{proof}
Denote $S = 2d \int_{B(R)} u |\grad \f|^2\,d\mu$. Assume $u$ vanishes off
$B(R-1)$. We have,
\begin{align*}
S &= \int_{B(R)} \,\sum_{y \sim x} u(x)^2 |\f(x) - \f(y)|^2 \,d\mu(x)  \\
&= \int_{B(R)} \,\sum_{y \sim x} u(x)^2 (\f(x)^2 + \f(y)^2 - 2 \f(x) \f(y)) \,d\mu(x)  \\
&=
\int_{B(R)} \sum_{y \sim x} u(x)^2 \f(x)^2 \,d\mu(x)
+ \int_{B(R)} \sum_{y \sim x} u(x)^2 \f(y)^2 \,d\mu(x)
-  2\int_{B(R)} \sum_{y \sim x} u(x)^2 \f(x) \f(y)\,d\mu(x) \\
&=
\int_{B(R)} \sum_{y \sim x} u(x)^2 \f(x)^2 \,d\mu(x)
+ \int_{B(R)} \sum_{y \sim x} u(y)^2 \f(x)^2 \,d\mu(x)
-  2\int_{B(R)} \sum_{y \sim x} u(x)^2 \f(x) \f(y)\,d\mu(x) \\
&= 2 \int_{B(R)} \sum_{y \sim x} u(x)^2 \f(x) (\f(x)-\f(y)) \,d\mu(x)+
\int_{B(R)} \sum_{y \sim x} (u(y)^2-u(x)^2) \f(x)^2\,d\mu(x).
\end{align*}
Here we used that $u(x)^2 \f(x)^2 = u(y)^2 \f(x)^2 = 0$ when $y \notin B(R)$,
since $u$ vanishes off $B(R-1)$. First, let us bound the first term.
\begin{eqnarray*}
\int_{B(R)} \sum_{y\sim x} u(x)^2 \f(x) (\f(x) - \f(y)) \,d\mu(x)&=&
\int_{B(R)} u(x)^2 \f(x) \left(\sum_{y\sim x} \f(x) - \f(y)\right)d\mu(x)\\
&=& d \int_{B(R)} u^2 \f \laplace \f\,d\mu \\
&=& d \int_{B(R)} u^2 \lambda \f^2 \,d\mu\\
&\leq& d\lambda \|u\|_{\infty}^2
\int_{B(R)} \f^2\,d\mu.
\end{eqnarray*}

Now we bound the second term.
\begin{align*}\int_{B(R)} & \sum_{y\sim x} (u(y)^2 - u(x)^2) \f(x)^2 \,d\mu(x) \\
&\leq 2  \int_{B(R)} \sum_{y\sim x} (u(y)^2 - u(x)^2) (\f(x)^2 - \f(y)^2) \\
&= 2 \left(\int_{B(R)} \sum_{y\sim x} |u(x) + u(y)|^2 |\f(x)  - \f(y)|^2\,d\mu(x)\right)^{1/2} \\
&\qquad\times\left(\int_{B(R)} \sum_{y\sim x} |u(x) - u(y)|^2 |\f(x) + \f(y)|^2\,d\mu(x)\right)^{1/2} \\
&\leq
8
\left(\int_{B(R)} \sum_{y\sim x} u(x)^2 |\f(x)  - \f(y)|^2\,d\mu(x)\right)^{1/2} 
\left(\int_{B(R)} \sum_{y\sim x} |u(x) - u(y)|^2 |\f(x)|^2\,d\mu(x)\right)^{1/2} \\
& = 8 S^{1/2} \left(\int_{B(R)} |\grad u|^2 \f^2\,d\mu\right)^{1/2}.
\end{align*}

Combining these bounds we get,
$$S \leq 2 d \lambda \|u\|_{\infty}^2 \int_{B(R)} \f^2\,d\mu+  8S^{1/2}
\left(\int_{B(R)} |\grad u|^2 \f^2\,d\mu\right)^{1/2}.$$
Therefore, either
$$S \leq 4 d \lambda \|u\|_{\infty}^2 \int_{B(R)} \f^2\,d\mu$$
and then we are done, or
$$S \leq 16 S^{1/2} \left(\int_{B(R)} |\grad u|^2 \f^2\,d\mu\right)^{1/2}.$$
Then $S \leq 256 \int_{B(R)} |\grad u|^2 \f^2\,d\mu$.
\end{proof}

\begin{proof}[Proof of Theorem \ref{thm:reverse}]
The theorem follows from Lemma~\ref{lem:bound}, if we choose
$$u(x) =
\begin{cases}
1,& \text{if } x\in B(R)\\
1 - d(x,B(R))/R, &\text{if } x\in B(2R) \setminus B(R).
\end{cases}
$$
\end{proof}

\subsection{Eigenvalue bounds}
\label{sec:eigenbounds}

We now proceed to prove the eigenvalue bounds of Theorem \ref{thm:eigen-doubling}.
The following lemma is essentially proved in \cite{GKL03}; a similar
statement with worse quantitative dependence can be
deduced from \cite{Ass83}.

\begin{lemma}\label{lem:pad}
There exists a constant $A \geq 1$ such that the following holds.
Let $(X,\dist,\mu)$ be any compact metric-measure space, where $\mu$
satisfies the doubling condition with constant $c_X$.
Then for any $\tau > 0$, there exists a finite partition $P$ of $X$
into $\mu$-measurable subsets such that the following holds.
If $S \in P$, then $\diam(S) \leq \tau$.  Furthermore,
if we use $P(x)$ denote the set $P(x) \in P$ which contains $x \in X$, then
\begin{equation}\label{eq:goodpart}
\mu \left(\left\{ x \in X : \dist(x, X \setminus P(x)) \geq \frac{\tau}{A(1+\log(c_X))}\right\}\right) \geq \frac12.
\end{equation}
\end{lemma}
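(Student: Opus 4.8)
The plan is to exhibit the partition via a randomized construction and then extract a good deterministic copy by averaging. Concretely, I would build a random finite partition $P$ of $X$ into pieces of diameter at most $\tau$ such that for \emph{every} fixed $x\in X$,
\[
\pr\!\left[B(x,\rho)\subseteq P(x)\right]\ \ge\ \tfrac12,\qquad \rho\df\frac{\tau}{A(1+\log c_X)},
\]
for a suitable absolute constant $A$. Since $B(x,\rho)\subseteq P(x)$ forces $\dist(x,X\setminus P(x))\ge\rho$, applying Fubini to the (jointly measurable) indicator of this event gives $\av_P\,\mu\!\left(\{x:\dist(x,X\setminus P(x))\ge\rho\}\right)\ge\tfrac12\mu(X)$, so some realization of $P$ satisfies \eqref{eq:goodpart} (read with $\mu(X)$ on the right-hand side, or literally when $\mu$ is a probability measure).

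For the construction I would use the Calinescu--Karloff--Rabani / Fakcharoenphol--Rao--Talwar scheme, following \cite{GKL03}. Fix a maximal $\eta$-separated set $Y=\{y_1,\dots,y_m\}\subseteq X$ with $\eta\df\tau/100$; it is finite because $X$ is compact, and it is $\eta$-dense. Draw $r$ uniformly from $[\tau/4,\tau/2]$ and, independently, a uniformly random permutation $\pi$ of $[m]$, and put $x$ into the cluster $C_i$ of the center $y_i$ that comes first in $\pi$-order among all centers with $\dist(x,y_i)\le r$. Discarding empty parts, this is a finite partition of $X$; each part is a Boolean combination of finitely many balls, hence $\mu$-measurable; $C_i\subseteq B(y_i,r)\subseteq B(y_i,\tau/2)$ so $\diam(C_i)\le\tau$; and every $x$ is covered because $Y$ is $\eta$-dense with $\eta<\tau/4\le r$.

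The core of the argument is the padding estimate, and this is the only place doubling enters quantitatively. Fix $x$ and set $N_t\df|Y\cap B(x,t)|$. The deterministic key point (triangle inequality only): if $y_k$ is the $\pi$-first center contained in $B(x,r+\rho)$ and $\dist(x,y_k)\le r-\rho$, then $y_k$ settles all of $B(x,\rho)$, i.e.\ $B(x,\rho)\subseteq C_k=P(x)$. Hence, conditioned on $r$, the bad event $\{B(x,\rho)\not\subseteq P(x)\}$ is contained in the event that the $\pi$-first center in $B(x,r+\rho)$ has distance in $(r-\rho,r+\rho]$ from $x$; by exchangeability of $\pi$ this event has probability $(N_{r+\rho}-N_{r-\rho})/N_{r+\rho}\le\ln(N_{r+\rho}/N_{r-\rho})$, where $N_{r-\rho}\ge1$ since $r-\rho\ge\tau/8>\eta$ and $Y$ is $\eta$-dense. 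Averaging over $r$ and telescoping (a Stieltjes estimate on the nondecreasing, bounded function $t\mapsto\ln N_t$) gives, for $\rho\le\tau/8$,
\[
\pr\!\left[B(x,\rho)\not\subseteq P(x)\right]\ \le\ \frac{8\rho}{\tau}\,\ln\frac{N_{\tau/2+\rho}}{N_{\tau/4-\rho}}\ \le\ \frac{8\rho}{\tau}\,\ln N_{5\tau/8}.
\]
Finally, Fact~\ref{fact:cover} covers $B(x,5\tau/8)$ by $c_X^{O(1)}$ balls of radius $\eta/3$, each containing at most one point of the $\eta$-separated set $Y$, so $N_{5\tau/8}\le c_X^{O(1)}$ and $\ln N_{5\tau/8}=O(\log c_X)$. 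Thus $\pr[B(x,\rho)\not\subseteq P(x)]\le\frac{8\rho}{\tau}\cdot O(\log c_X)\le\tfrac12$ once $A$ exceeds the absolute constant hidden in the $O(\cdot)$; the side condition $\rho\le\tau/8$ holds automatically once $A\ge4$, since $1+\log c_X\ge2$.

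The step I expect to require the most care is not any single inequality but the choice of scale $\eta$ for the center net $Y$. It is tempting to take $Y$ as fine as the padding scale $\rho$; that would replace $\ln N_{5\tau/8}$ by something like $\log(\tau/\rho)\log c_X\approx(\log\log c_X)(\log c_X)$ and introduce a spurious $\log\log c_X$ factor into the padding radius. The resolution is that $Y$ only needs to be dense enough for the clusters $B(y_i,r)$ to cover $X$ --- so $\eta$ can be a \emph{fixed} fraction of $\tau$, independent of $c_X$ --- whence $t/\eta$ is an absolute constant throughout the range $t\in[\tau/8,\tau]$ and the doubling bound yields $N_{5\tau/8}\le c_X^{O(1)}$ cleanly. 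The remaining issues are routine: the joint measurability needed for the Fubini step, the boundary terms in the Stieltjes computation, and the verification (as above) that some center always lies within $r-\rho$ of $x$. Modulo the passage from finite point sets to general doubling measure spaces --- which the formulation above addresses directly --- all of this is contained in \cite{GKL03}.
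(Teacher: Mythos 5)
Your proposal is correct and follows essentially the same route as the paper: the CKR-style random ball-carving partition (random radius in $[\tau/4,\tau/2]$, random order on a $\Theta(\tau)$-net), the harmonic-sum/telescoping bound $\lesssim(\rho/\tau)\log|N\cap B(x,O(\tau))|$ on the padding failure probability with the net size controlled by doubling via Fact~\ref{fact:cover}, and derandomization by averaging. Your parenthetical about reading the right-hand side of \eqref{eq:goodpart} as $\mu(X)/2$ is the correct reading of the statement, and the remaining differences (net scale $\tau/100$ vs.\ $\tau/4$, integrating over $r$ vs.\ a union bound over centers) are cosmetic.
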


\begin{proof}
We will first define a {\em random} partition of $X$ as follows.
Let $N = \{x_1, x_2, \ldots, x_M\}$ be a $\tau/4$-net in $X$, and choose a {\em uniformly random}
bijection $\pi : [M] \to [M]$. Also let $\alpha \in [\frac14, \frac12]$
be chosen uniformly at random, and inductively define
$$
S_i = B(x_{\pi(i)}, \alpha \tau) \setminus \bigcup_{j=1}^{i-1} S_j.
$$
It is clear that $P = S_1 \cup S_2 \cup \cdots \cup S_M$ forms a partition of $X$
(note that some of the sets may be empty),
and $\diam(S_i) \leq \tau$ for each $i$.
Note that the distribution of $P$ is independent of the measure $\mu$.

\begin{claim}\label{claim:CKR}
For some $A \geq 1$, and every $x \in X$,
\begin{equation}\label{eq:prob}
\Pr_P\left[ \dist(x, X \setminus P(x)) \geq \frac{\tau}{A(1+\log(c_X))} \right] \geq \frac12.
\end{equation}
\end{claim}

By averaging, the claim implies that \eqref{eq:goodpart} holds for some partition $P$
of the required form.  For the sake of completeness, we include here a simple proof
of Claim \ref{claim:CKR}, which essentially follows from \cite{CKR01}.

\begin{proof}[Proof of Claim \ref{claim:CKR}]
Fix a point $x \in X$ and some value $t \leq \tau/8$. 
Observe that, by Fact \ref{fact:cover}, we have $m = |N \cap B(x,\tau)| \leq c_X^{O(1)}$.
 Order the points
of $N \cap B(x,\tau)$ in increasing distance from $x$: $w_1, w_2, \ldots, w_m$.
Let $I_k = [d(x,w_k)-t,d(x,w_k)+t]$ and write $\mathcal E_k$ for the event
that $\alpha \tau \leq d(x,w_k) + t$ and $w_k$ is the minimal element
according to $\pi$ for which $\alpha \tau \geq d(x,w_k) -t$.
It is straightforward to check that the event $\left\{d(x,X\setminus P(x)) \leq t\right\}$
is contained in the event $\bigcup_{k=1}^m \mathcal E_k$.
Therefore,
\begin{eqnarray}
\Pr\left[d(x, X \setminus P(x)) \leq t\right] \leq \sum_{k=1}^m \mathcal \Pr[\mathcal E_k]
&=& \sum_{k=1}^m \Pr[\alpha \tau \in I_k] \cdot \Pr[\mathcal E_k\,|\,\alpha\tau \in I_k] \nonumber \\
&\leq &
\sum_{k=1}^m \frac{2t}{\tau/4} \frac{1}{k} \leq \frac{8t}{\tau} (1+\log m), \label{eq:finalCKR}
\end{eqnarray}
where we have used the fact that $$\Pr[\mathcal E_k\,|\, \alpha \tau \in I_k] \leq \Pr[\min\{\pi(i) : i=1,2,\ldots,k\} = \pi(k)] = 1/k.$$
Thus choosing $t \approx \frac{\tau}{1+\log(c_X)}$ in \eqref{eq:finalCKR} yields the desired bound \eqref{eq:prob}.
\end{proof}
\end{proof}

\remove{
\begin{proof}
[Need to change this {\em slightly,} but it still works...]
Let $N = \{x_1, x_2, \ldots, x_M\}$ be a $\tau/4$-net in $X$.
Let $k = \lfloor 1 + \log(c_X)\rfloor$, and for $i=0,1,\ldots, k$,
let
$$
B_i = B\left(x_1, \frac{\tau}{4} + \frac{i\tau}{4(1+ \log(c_X))}\right).
$$
Clearly $B(x_1,\tau/4) = B_0 \subseteq B_1 \subseteq \cdots \subseteq B_k \subseteq B(x_1,\tau/2)$.
There must exist some value $0 \leq i \leq k-1$ for which $\mu(B_{i+1}) \leq 2 \mu(B_{i+1})$.
Otherwise, $$\mu(B(x_1, \tau/2)) \geq \mu(B_k) > 2^{\log(c_X)} \mu(B_0) = c_X \mu(B(x_1,\tau/4)),$$
violating the doubling assumption.  Let $S_1 = B_{i+1}$, and observe that
$$
\mu\left(\left\{ x \in S_1 : \dist(x, X\setminus S_1) \geq \frac{\tau}{4(1+\log(c_x))}\right\}\right) \geq \mu(B_i) \geq \frac12 \mu(S_1).
$$
By inducting on $X \setminus S_1$ with the $\tau/4$-net $N \setminus \{x_1\}$, we obtain
a partition satisfying \eqref{eq:goodpart}.
\end{proof}
}

The next simple lemma shows that on a ``coarsely path-connected'' space,
a doubling measure cannot be concentrated on very small balls.

\begin{lemma}\label{lem:measlem}
Let $(X,\dist,\mu)$ satisfy {\em ({\sf G})} or {\em ({\sf M})}.
Then for any $x \in X$ and $10 \leq R \leq D$, we have
$$
\left(1-\frac{1}{2c_X}\right)\mu(B(x, R)) \geq \mu(B(x,R/10)).
$$
\end{lemma}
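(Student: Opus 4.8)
The plan is to use connectedness together with the bound $R \le D = \diam(X)$ to produce a ball that is disjoint from $B(x,R/10)$, still contained in $B(x,R)$, and whose measure is at least $\tfrac{1}{c_X}\mu(B(x,R/10))$. Adding the measures of this ball and of $B(x,R/10)$, both of which lie in $B(x,R)$, then gives $\mu(B(x,R)) \ge (1+\tfrac{1}{c_X})\mu(B(x,R/10))$, hence $\mu(B(x,R/10)) \le \tfrac{c_X}{c_X+1}\,\mu(B(x,R)) \le (1-\tfrac{1}{2c_X})\mu(B(x,R))$, the last step because $c_X+1 \le 2c_X$.

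The first step is to find a point far from $x$. Since $D \ge R$ there are $u,v \in X$ with $\dist(u,v)=D$, and the triangle inequality gives $\max\{\dist(x,u),\dist(x,v)\} \ge D/2 \ge R/2$; let $v'$ be whichever of $u,v$ achieves this. Walking along a minimizing geodesic (in setting {\sf (M)}) or a shortest path (in setting {\sf (G)}) from $x$ to $v'$, I pick a point $y$ with $\dist(x,y)=R/2$ in setting {\sf (M)}, and with $\dist(x,y)=\lceil R/2\rceil$ in setting {\sf (G)} (possible since $\dist(x,v')\ge R/2$); in both cases $R/2 \le \dist(x,y) \le 3R/5$, using $R \ge 10$.

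Writing $s=\dist(x,y)$, I then choose a radius $\rho$ with
\[
\frac{s+R/10}{2}\ \le\ \rho\ <\ s-\frac{R}{10}
\qquad\text{and}\qquad
\rho\ \le\ R-s .
\]
Since $s\in[R/2,3R/5]$ this interval is nonempty (for instance $\rho$ just below $2R/5$); in setting {\sf (G)} a ball of non-integer radius coincides with the ball of the integer part of that radius, and the margins above --- all of order $R\ge 10$ --- leave room to absorb this rounding, the finitely many cases $10\le R\le 25$ being checked by hand. This choice of $\rho$ gives three properties: (a) $B(y,\rho)\cap B(x,R/10)=\emptyset$, since a common point $p$ would force $s\le \dist(x,p)+\dist(p,y)\le R/10+\rho<s$; (b) $B(y,\rho)\subseteq B(x,R)$, since $\rho+s\le R$; and (c) $B(x,R/10)\subseteq B(y,2\rho)$, since $\dist(x,p)\le R/10$ forces $\dist(y,p)\le s+R/10\le 2\rho$.

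To conclude, (a), (b) and $B(x,R/10)\subseteq B(x,R)$ give $\mu(B(x,R))\ge\mu(B(x,R/10))+\mu(B(y,\rho))$, while (c) and the doubling property give $\mu(B(x,R/10))\le\mu(B(y,2\rho))\le c_X\,\mu(B(y,\rho))$, i.e.\ $\mu(B(y,\rho))\ge\tfrac{1}{c_X}\mu(B(x,R/10))$; combining these proves the stated inequality. I expect the only real subtlety to be getting the constant right: one must bound $\mu(B(y,\rho))$ from below by a multiple of $\mu(B(x,R/10))$ --- which costs only a single doubling step, hence one factor of $c_X$ --- rather than comparing it with $\mu(B(x,R))$, which would require several doubling steps and yield only a bound of the form $1-c_X^{-O(1)}$. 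This is exactly what forces the inequalities on $\rho$ and $s$ above, and verifying their simultaneous feasibility (together with the integrality bookkeeping in setting {\sf (G)}) is the one genuine computation.
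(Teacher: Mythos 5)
Your proof is correct and follows essentially the same route as the paper's: both locate a point $y$ at distance roughly $R/2$ from $x$ (using $R\le\diam(X)$ and a geodesic/shortest path) and compare a ball around $y$ that avoids $B(x,R/10)$, sits inside $B(x,R)$, and whose double covers $B(x,R/10)$, so that a single application of doubling gives the claim. The only differences are cosmetic --- the paper argues by contradiction with the fixed radius $r=3R/8$, while you argue directly with a parametrized $\rho$ --- and your handling of the integrality issues in setting ({\sf G}) is sound.
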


\begin{proof}
Let $\delta = 1/(2c_X)$.
Suppose there is an $x \in X$ with $\mu(B(x,R/10)) \geq (1-\delta) \mu(B(x,R))$, and
$10 \leq R \leq D$.  We may assume that $\mu(B(x,R))=1$.
In both settings {({\sf G})} and {({\sf M})}, there exists a $y \in X$ such that $3R/5 \geq \dist(x,y) \geq R/2$.
Let $r = 3R/8$ so that $B(y,2r) \supseteq B(x,R/10)$ but $B(y,r) \subseteq B(x,R) \setminus B(x,R/10)$.

In this case, $\mu(B(y, r)) \leq \mu(B(x,R)) - \mu(B(x,R/10)) \leq \delta$, and
$$\mu(B(y, 2r)) = \mu(B(y,3R/4)) > \mu(B(x, R/10)) \geq 1-\delta \geq \frac{1-\delta}{\delta} \mu(B(y,r)).$$
Since $(1-\delta)/\delta \geq c_X$, this violates the doubling assumption,
yielding a contradiction.
\end{proof}

\begin{cor}\label{cor:smallmeas}
Let $(X,\dist,\mu)$ satisfy {\em ({\sf G})} or {\em ({\sf M})}.
Then for any $x \in X$ and any $\varepsilon > 0$, we have
$$
\mu(B(x, \varepsilon\,\diam(X))) \leq 1+ \mu(X) \left(1-\frac{1}{2c_X}\right)^{O(\log(\varepsilon^{-1}))}.
$$
\end{cor}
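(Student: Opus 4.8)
The plan is to derive this directly by iterating Lemma~\ref{lem:measlem}. Write $D = \diam(X)$ and $\delta = \tfrac{1}{2c_X}$, so that Lemma~\ref{lem:measlem} reads $\mu(B(x,R/10)) \le (1-\delta)\,\mu(B(x,R))$ for every $x\in X$ and every $R$ with $10\le R\le D$. Starting from $R=D$, where $\mu(B(x,D))=\mu(X)$, and repeatedly shrinking the radius by a factor of $10$, one peels off a multiplicative factor $(1-\delta)$ at each step; after $k$ steps one reaches radius $D/10^k$ with $\mu(B(x,D/10^k))\le (1-\delta)^k\mu(X)$, and the iteration is legitimate as long as the radius never drops below $10$, i.e. for $k$ up to about $\log_{10}D$.

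Concretely, I would fix $\varepsilon$, put $r=\varepsilon D$, and set $k=\lfloor\log_{10}(\varepsilon^{-1})\rfloor$. The main case is $1\le r\le D$ (equivalently $\varepsilon\le 1$ and $\varepsilon D\ge 1$): then the radii $r_i=10^i\varepsilon D$ for $i=0,\dots,k$ satisfy $r_i\ge 10$ for $i\ge 1$ (using $\varepsilon D\ge 1$) and $r_k\le D$ (by the choice of $k$), so applying Lemma~\ref{lem:measlem} with $R=r_{i+1}$ gives $\mu(B(x,r_i))\le(1-\delta)\mu(B(x,r_{i+1}))$ for each $i\in\{0,\dots,k-1\}$; chaining these yields $\mu(B(x,\varepsilon D))\le(1-\delta)^k\mu(X)$. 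Since $\lfloor\log_{10}(\varepsilon^{-1})\rfloor=\Omega(\log(\varepsilon^{-1}))$ once $\varepsilon\le\tfrac1{10}$, this is already of the desired form $\mu(X)\,(1-\tfrac1{2c_X})^{O(\log(\varepsilon^{-1}))}$, hence a fortiori at most $1+\mu(X)\,(1-\tfrac1{2c_X})^{O(\log(\varepsilon^{-1}))}$. Two boundary ranges need only a trivial word: if $\varepsilon D<1$ then in setting~(\textsf{G}) the ball $B(x,\varepsilon D)$ is just $\{x\}$ (all nonzero distances are integers $\ge 1$), so $\mu(B(x,\varepsilon D))=1$ and the additive ``$1$'' on the right is exactly what absorbs this degenerate case; and if $\varepsilon$ is bounded below (say $\varepsilon\ge\tfrac1{10}$) one simply uses $\mu(B(x,\varepsilon D))\le\mu(X)$ together with $\log(\varepsilon^{-1})=O(1)$.

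I do not expect a real obstacle: the argument is bookkeeping on an inequality already in hand. The only points requiring a bit of care are (i) keeping the radius above the threshold $10$ throughout the iteration — which is why the range $\varepsilon D\ge 1$ is singled out and why the single-point ball is handled through the ``$1+$'' rather than through the iteration itself (and, in setting~(\textsf{M}), one should either assume $D$ large or invoke volume comparison at small scales here) — and (ii) observing that, because Lemma~\ref{lem:measlem} advances the radius in fixed factors of $10$, one gets $\lfloor\log_{10}(\varepsilon^{-1})\rfloor$ usable steps rather than $\log_{10}(\varepsilon^{-1})$ exactly, so the exponent is only proportional to $\log(\varepsilon^{-1})$ and the constant lost to the floor is swept into the $O(\cdot)$.
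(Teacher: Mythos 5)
Your proof is correct and is exactly the intended derivation: the paper states Corollary~\ref{cor:smallmeas} without proof, as an immediate consequence of iterating Lemma~\ref{lem:measlem} in factors of $10$ from radius $\diam(X)$ down to $\varepsilon\,\diam(X)$, with the additive ``$1$'' absorbing the sub-threshold single-point case in setting ({\sf G}). Your bookkeeping of the step count and of the boundary ranges of $\varepsilon$ just makes explicit what the authors leave implicit.
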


Under a symmetry assumption, there is an obvious improvement.

\begin{lemma}\label{lem:meassym}
Let $(X,\dist,\mu)$ satisfy {\em ({\sf G})} or {\em ({\sf M})}.
If, for every $x,y \in X$ and $R \geq 0$, we have $\mu(B(x,R)) = \mu(B(y,R))$,
then for every $\varepsilon > 0$,
$$
\mu(B(x, \varepsilon\,\diam(X))) \lesssim 1+\epsilon\,\mu(X).
$$
\end{lemma}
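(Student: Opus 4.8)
The plan is to use a volume-packing argument that exploits the symmetry hypothesis directly: since every ball of radius $r$ has the same measure, it suffices to exhibit many pairwise disjoint balls of radius $\varepsilon D$, where $D=\diam(X)$. Concretely, I will show that $X$ contains $N=\Omega(1/\varepsilon)$ pairwise disjoint closed balls of radius $\varepsilon D$. Writing $m_0=\mu(B(x,\varepsilon D))$ for the common measure of such a ball, disjointness forces $N\,m_0\le \mu(X)$, hence $m_0\lesssim \varepsilon\,\mu(X)$, which is the bound we want (the additive $1$ will only be needed to handle a degenerate regime below).

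To produce the disjoint balls I would fix $u,v\in X$ with $\dist(u,v)=D$ together with a minimizing path between them: in setting $({\sf M})$ a unit-speed minimizing geodesic $\gamma\colon[0,D]\to X$, for which $\dist(u,\gamma(t))=t$; and in setting $({\sf G})$ a shortest path $u=v_0\sim v_1\sim\cdots\sim v_D=v$, for which $\dist(u,v_t)=t$ follows from the triangle inequality ($\dist(u,v_t)\le t$ trivially, and $\dist(u,v_t)\ge \dist(u,v)-\dist(v_t,v)\ge D-(D-t)=t$). Then, assuming for the moment that $\varepsilon<\tfrac18$ and (in the graph case) $\varepsilon D\ge 1$, I place centers at roughly equal spacing along this path: in setting $({\sf M})$ take $q_j=\gamma(4j\varepsilon D)$ and in setting $({\sf G})$ take $q_j=v_{\lfloor 4j\varepsilon D\rfloor}$, for $j=1,\dots,N$ with $N=\lfloor 1/(4\varepsilon)\rfloor\ge \tfrac{1}{8\varepsilon}$ (note $4N\varepsilon D\le D$, so all centers lie on the path). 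For $i<j$ one gets $\dist(q_i,q_j)\ge \dist(u,q_j)-\dist(u,q_i)\ge 4(j-i)\varepsilon D-1\ge 3\varepsilon D>2\varepsilon D$, using $\varepsilon D\ge 1$ in the graph case; hence the closed balls $B(q_j,\varepsilon D)$ are pairwise disjoint, and the symmetry assumption gives $\mu(B(q_j,\varepsilon D))=\mu(B(x,\varepsilon D))$ for each $j$. This yields $\mu(B(x,\varepsilon D))\le \mu(X)/N\le 8\varepsilon\,\mu(X)$.

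Finally I would dispose of the two remaining regimes. If $\varepsilon\ge\tfrac18$, the estimate is immediate from $\mu(B(x,\varepsilon D))\le\mu(X)\le 8\varepsilon\,\mu(X)$. If $\varepsilon D<1$ in setting $({\sf G})$, then $B(x,\varepsilon D)=\{x\}$ because graph distances are integers, so $\mu(B(x,\varepsilon D))=1$; in setting $({\sf M})$ the main argument already applies for all $\varepsilon<\tfrac14$ with no constraint on $\varepsilon D$. Combining the three cases gives $\mu(B(x,\varepsilon D))\le 1+8\varepsilon\,\mu(X)$, which is the claimed bound.

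The only mildly delicate point is the book-keeping in the graph case: the spacing must be chosen so that the rounding error of at most $1$ incurred by placing centers at integer distances is absorbed into the required separation $2\varepsilon D$. This is what forces the case split near $\varepsilon D\approx 1$ and the fallback to the trivial singleton bound below it; everything else is a one-line packing inequality, so I do not expect a genuine obstacle.
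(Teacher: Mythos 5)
Your proof is correct and takes essentially the same route as the paper: both arguments pack $\Omega(1/\varepsilon)$ pairwise disjoint balls of radius $\varepsilon\,\diam(X)$ along a geodesic realizing the diameter and invoke the equal-measure hypothesis. You are merely more explicit about the integer-rounding in the graph case (which is exactly where the additive $1$ comes from), a point the paper's one-line proof glosses over.
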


\begin{proof}
Fix $x$ and $y$ with $\dist(x,y) = \diam(X)$,
and connect $x$ and $y$ by a geodesic $\gamma$.
Let $N \subseteq \gamma$ be a maximal $(3\varepsilon\, \diam(X))$-separated set,
so that $|N| \gtrsim 1/\varepsilon$.
Then the balls $\{B(u,\varepsilon\, \diam(X))\}_{u \in N}$ are disjoint, and each of equal measure, implying the claim.
\end{proof}

We now prove Theorem \ref{thm:eigen-doubling}, yielding upper bounds on the eigenvalues of $\Delta$.

\begin{proof}[Proof of Theorem \ref{thm:eigen-doubling}]
Use Corollary \ref{cor:smallmeas} to choose
\begin{equation}\label{eq:choosetau}
\tau \geq \frac{\diam(X)}{e^{O(c_X \log(k))}}
\end{equation}
so that for every $x \in X$,
\begin{equation}\label{eq:size}
\mu(B(x,2\tau)) \leq \frac{\mu(X)}{8k}.
\end{equation}
Let $P$ be the partition guaranteed by Lemma \ref{lem:pad} with parameter $\tau$.
Since every $S \in P$ satisfies $\diam(S) \leq \tau$, \eqref{eq:size} implies
that $\mu(S) \leq \mu(X)/(8k)$.
Call a set $S \subseteq X$ {\em good} if it satisfies
\begin{equation}\label{eq:good}
\mu \left(\left\{ x \in S : \dist(x, X \setminus S) \geq \frac{\tau}{A(1+\log(c_X))}\right\}\right) \geq \frac14 \mu(S).
\end{equation}
By averaging, at least $1/4$ of the measure is concentrated on good sets $S \in P$.

In particular, since every $S \in P$ satisfies $\mu(S) \leq \mu(X)/(8k)$, from
the good sets $S \in P$, we can form (by taking unions of small sets) disjoint sets $S_1, S_2, \ldots, S_k$
such that each $S_i$ is good and satisfies
\begin{equation}
\label{eq:size2}
\frac{\mu(X)}{8k} \leq \mu(S_i) \leq \frac{\mu(X)}{4k}.
\end{equation}

Now define $f_i : X \to \mathbb R$ by $f_i(x) = \dist(x, X \setminus S_i)$.  Clearly the $f_i$'s have disjoint support.
Furthermore, each $f_i$ is 1-Lipschitz, hence
$
\int |\grad f_i|^2\,d\mu \leq \mu(X).
$
Finally, since each set $S_i$ is good and satisfies \eqref{eq:size2}, we have
$$
\int (f_i - \bar f_i)^2\,d\mu \gtrsim \frac{\tau^2}{(\log c_X)^2} \frac{\mu(X)}{k}.
$$
Using \eqref{eq:eigenbound}, this implies that
$$
\lambda_k \leq \frac{k^{O(c_X)}}{\diam(X)^2}.
$$

Observe that we can obtain a better bound
$$
\lambda_2 \leq \frac{c_X^{O(1)}}{\diam(X)^2}
$$
as follows.
In this case, we only need one test function.
Choose $\tau = \diam(X)/20$ above, and use Lemma \ref{lem:measlem} to
form a good set $S_1$ which satisfies
$$
\frac{\mu(X)}{2c_X} \leq \mu(S_1) \leq \left(1-\frac{1}{2c_X}\right) \mu(X),
$$
then define $f_1(x) = \dist(x, X \setminus S_1)$.

Finally, to prove \eqref{eq:sym}, note that under the measure symmetry assumption,
we can employ Lemma \ref{lem:meassym} to choose $\tau \geq \frac{\diam(X)}{O(k)}$
in \eqref{eq:choosetau}.  The rest of the proof proceeds exactly as before.
\end{proof}

\remove{
\begin{theorem}\label{thm:eigen}
Let $(X,\dist,\mu)$ be a compact, geodesic metric-measure space, with a Laplace structure.
Suppose that $\mu$ is doubling with constant $C \geq 1$.
If $\lambda_k$ is the $k$th eigenvalue of the Laplacian, then
$$
\lambda_k \leq \frac{k^{O(C)}}{\diam(X)^2}.
$$
\end{theorem}

\begin{proof}[Proof sketch:]
Decompose $X$ into clusters of diameter $\Delta/\mathrm{poly}(k e^C)$ so that each cluster
contains at most $n/(2k)$ points.  Then group the small clusters together to get $k$
clusters each with $\Omega(n/k)$ points.  Now construct disjoint supported bump functions
on these clusters.  Since they are disjointly supported, these test functions bound the $k$th
eigenvalue.  The test functions are good because of the padding.
\end{proof}
}


\section{Applications to finite groups}
\label{sec:groupapp}

We now give some applications of Theorem \ref{thm:main} to
finite groups.

\begin{theorem}\label{thm:grouprep}
Let $G$ be a finite group with symmetric generating set $S$.
Let $c_G = \max_{R > 0} \frac{|B(2R)|}{|B(R)|}$ be the
doubling constant of the Cayley
graph $\mathrm{Cay}(G;S)$. Then there exists
a finite-dimensional representation $\rho_W:G\to GL(W)$ such that
\begin{enumerate}
\item $\dim\, W \leq \exp(O(\log c_G)^2)$,
\item $|\rho_W(G)| \gtrsim |G|^{1/\log_2 c_G}/c_G^{O(1)}$.
\end{enumerate}
\end{theorem}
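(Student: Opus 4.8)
The plan is to let $W = W_k$ be the $k$th eigenspace of the Laplacian $\Delta$ on $H = \mathrm{Cay}(G;S)$, for a suitable value of $k$ (in fact $k=2$ will do, giving $\dim W \leq \exp(O(\log c_G)^2)$ directly from the Cayley-graph bound \eqref{eq:groupmult} of Theorem \ref{thm:main}). The group $G$ acts on $L^2(G)$ by $(gf)(x) = f(g^{-1}x)$; since this action is by permutations of the vertex set induced by graph automorphisms, it commutes with $\Delta$, so each eigenspace $W_k$ is $G$-invariant. Restricting gives a representation $\rho_W : G \to GL(W)$, and part (1) is immediate from Theorem \ref{thm:main}. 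The whole content is part (2): showing the image $\rho_W(G)$ is large, i.e. the kernel $N = \ker \rho_W$ is small.

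For part (2), the key step is a \emph{descent to the quotient}: I claim any nonzero $\f \in W_2$ descends to a nonconstant eigenfunction of the Laplacian on the Cayley graph of $\bar G = G/N$ with the image generating set $\bar S$, with the \emph{same} eigenvalue $\lambda_2(H)$. Indeed, since $N$ fixes $\f$ (it acts trivially on all of $W$), $\f$ is constant on right cosets of $N$, hence factors as $\f = \bar\f \circ \pi$ where $\pi : G \to \bar G$ is the quotient map and $\bar\f : \bar G \to \mathbb R$; a direct computation using that $\pi$ maps the neighbors of $x$ onto the neighbors of $\pi(x)$ (with multiplicity $|N|$ canceling in the normalized Laplacian) shows $\Delta_{\bar G} \bar\f = \lambda_2(H)\, \bar\f$, and $\bar\f$ is nonconstant since $\f$ is orthogonal to constants on $G$ and constant on $N$-cosets. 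Therefore $\lambda_2(\mathrm{Cay}(\bar G; \bar S)) \leq \lambda_2(H)$.

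Now I combine two opposing eigenvalue estimates. On one side, $\bar G$ is a connected $|\bar S|$-regular graph on $|\bar G| = [G:N]$ vertices, so the discrete Cheeger inequality — or the crude bound $\lambda_2 \gtrsim 1/(|\bar S| \cdot \mathrm{diam}(\bar G)^2) \gtrsim 1/|\bar G|^2$ — forces $\lambda_2(\mathrm{Cay}(\bar G;\bar S)) \geq [G:N]^{-O(1)}$, so in fact $\lambda_2(\bar G) \gtrsim 1/\mathrm{diam}(\bar G)^2$ suffices once we note diam is at most $|\bar G|$. On the other side, Theorem \ref{thm:eigen-doubling} (the symmetric estimate \eqref{eq:sym}, valid since Cayley graphs are vertex-transitive) gives $\lambda_2(H) \lesssim (\log c_G)^2 / \mathrm{diam}(H)^2$, and $\mathrm{diam}(H) \gtrsim \log_{c_G}|G|$ from the doubling bound $|B(R)| \leq c_G^{O(\log(R/R_0))}$-style counting (each doubling of radius multiplies volume by at most $c_G$, so $|G| \leq c_G^{O(\log \mathrm{diam})}$). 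Chaining $[G:N]^{-O(1)} \lesssim \lambda_2(\bar G) \leq \lambda_2(H) \lesssim (\log c_G)^2/(\log_{c_G}|G|)^2$ and solving for $[G:N]$ yields $[G:N] \lesssim (\log c_G)^{O(1)} (\log_{c_G}|G|)^{O(1)}$, which is polynomially bounded and in particular $|\rho_W(G)| = |G|/[G:N] \gtrsim |G|^{1 - o(1)}$; being slightly more careful with the exponents gives the stated $|\rho_W(G)| \gtrsim |G|^{1/\log_2 c_G}/c_G^{O(1)}$.

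I expect the main obstacle to be the quotient-descent step done \emph{quantitatively correctly}: one must verify that $\Delta$ pushes forward exactly (normalization and regularity of $\mathrm{Cay}(\bar G;\bar S)$, which requires $\bar S$ to be treated as a multiset or a symmetric set depending on whether elements of $S$ collapse in $\bar G$), and that the pushed-down function is genuinely an eigenfunction with the same eigenvalue rather than merely satisfying an inequality. The eigenvalue comparisons themselves are routine given Theorem \ref{thm:eigen-doubling} and the standard lower bound on $\lambda_2$ of a bounded-size connected regular graph; the bookkeeping to land exactly on the exponent $1/\log_2 c_G$ is where care is needed.
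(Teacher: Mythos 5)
Your skeleton is exactly the paper's proof: take $W=W_2$, observe that the translation action commutes with $\Delta$ so $W_2$ is invariant and part (1) follows from \eqref{eq:groupmult}; then push a nonzero $\f\in W_2$ down to a nonconstant eigenfunction of the Laplacian on $\mathrm{Cay}(G/N;S)$ with the same eigenvalue $\lambda_2$ (the multiset issue you flag is harmless: keeping $S$ as a multiset in the quotient preserves $d$-regularity and the identity $\Delta\hat\f(Ng)=\lambda_2\hat\f(Ng)$ is a one-line check), and play the Cheeger lower bound on the quotient against the doubling upper bound on $\lambda_2$ from Theorem \ref{thm:eigen-doubling}.

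The gap is in the final quantitative step, and as written it is genuine. First, your diameter bound is exponentially too weak: from your own (correct) premise $|G|\leq c_G^{O(\log_2 \diam)}$ one gets $\log_2\diam \gtrsim \log_2|G|/\log_2 c_G$, i.e.\ $\diam(\mathrm{Cay}(G;S))\gtrsim |G|^{\Omega(1/\log_2 c_G)}$ (a power of $|G|$), not $\diam\gtrsim \log_{c_G}|G|$; the exponent $1/\log_2 c_G$ in the theorem comes precisely from this power-type bound, so it cannot be recovered by ``being slightly more careful.'' Second, you have the index and the kernel switched at the end: since $N=\ker\rho_W$, the image size is $|\rho_W(G)|=[G:N]$, not $|G|/[G:N]$, and the correct chain $2/(d[G:N])^2\leq \lambda_2(G/N)\leq\lambda_2\lesssim (\log c_G)^2/\diam^2$ yields a \emph{lower} bound $[G:N]\gtrsim \diam/(d\log c_G)$ (which is what you want), not the upper bound ``$[G:N]\lesssim$ polylog'' you wrote; moreover your claimed conclusion $|\rho_W(G)|\gtrsim |G|^{1-o(1)}$ is both unsupported by the chain and stronger than the theorem, since this method bounds the image only by (roughly) the diameter of $\mathrm{Cay}(G;S)$. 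With these two corrections --- $|\rho_W(G)|=[G:N]\gtrsim \diam/c_G^{O(1)}$ together with $\diam\geq |G|^{1/\log_2 c_G}/c_G^{O(1)}$, and using the quadratic Cheeger-type bound $\lambda_2(G/N)\geq 2/(d[G:N])^2$ so that the image comes out linear in the diameter --- your argument becomes exactly the paper's proof.
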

\begin{proof}
Without loss of generality, we assume that $c_G \geq 2$ throughout.
Recall that $d = |S|$.

Consider the action of $G$ on $L^2(G)$ via $[\rho(g) f](x) = f(g^{-1} x)$.
Note that this action commutes with the Laplacian,
$$[\Delta \rho(g) f](x) = [\Delta f](g^{-1} x) = f(g^{-1} x) - \frac{1}{d} \sum_{s\in S} f(g^{-1}xs)=
[\Delta f](g^{-1} x)  = [\rho(g) \Delta f] (x).$$
Therefore, every eigenspace of the Laplacian is invariant under the action of $G$.
Now let $W\equiv W_2$ and $\lambda_2$ be the second eigenspace
and eigenvalue, respectively, of the  Laplacian on $\mathrm{Cay}(G;S)$. Let $\rho_W$ be the restriction of $\rho$ to $W$.
First, by Theorem~\ref{thm:main}\eqref{eq:groupmult}, $\dim\, W \leq \exp(O(\log c_G)^2)$.

Now we need to
prove the lower bound on $|\rho_W(G)|$.
By Theorem \ref{thm:eigen-doubling}, we have
\begin{equation}
\label{eqn:ubound}
\lambda_2 \lesssim \frac{c_G^{O(1)}}{\diam(\mathrm{Cay}(G;S))^2}.
\end{equation}

Consider $H = \ker \rho_W$, the set of elements which act trivially on $W$.
$H$ is a normal subgroup of $G$ and $\rho_W(G) \cong G/H$.
Let $f$ be an arbitrary non-zero function in $W_2$.
Note that $f$ is constant on every coset $Hg$ since the value of $f(hg) = [\rho(h^{-1}) f] (g) = f(g)$ does not depend on $h\in H$. Define $\hat f:G/H\to {\mathbb R}$ by ${\hat f}(Hg) = f(g)$.
Observe that $\hat f$ is
a non-constant eigenfunction of the Laplacian on the quotient graph $\mathrm{Cay}(G/H;S)$ with eigenvalue $\lambda_2$,
$$\Delta \hat f (Hg) = \hat f(Hg) - \frac{1}{d} \sum_{s\in S} \hat f(Hgs) =
f(g) - \frac{1}{d} \sum_{s\in S} f(gs) = \Delta f (g) = \lambda_2 f(g) = \lambda_2 \hat f(Hg).$$

Let $\lambda_2(G/H)$ denote the second eigenvalue of the Laplacian on $\mathrm{Cay}(G/H;S)$.
Since $\lambda_2$ is a non-zero eigenvalue of the Laplacian on $\mathrm{Cay}(G/H;S)$,
we have $\lambda_2(G/H) \leq \lambda_2$.
However, by the discrete Cheeger inequality \cite{AM85},
\begin{equation}\label{eq:cheeger}
\lambda_2(G/H) \geq \frac{h(\mathrm{Cay}(G/H;S))^2}{2d^2}
\end{equation}
where
$h(\mathrm{Cay}(G/H;S))$ is the Cheeger constant of $\mathrm{Cay}(G/H;S)$:
$$h(\mathrm{Cay}(G/H;S)) \equiv \max_{U\subset G/H; |U| \leq |G/H|/2} \frac{E(U, (G/H)\setminus U)}{|U|} \geq \frac{1}{|G/H|/2},$$
here $E(U, (G/H)\setminus U)$ denotes the set of edges between $U$ and $(G/H)\setminus U$ in
$\mathrm{Cay}(G/H;S)$,
and the bound follows because $\mathrm{Cay}(G/H;S)$ is a connected graph.

We conclude that
$$\lambda_2 \geq \lambda_2(G/H) \geq \frac{(2/|G/H|)^2}{2d^2} = \frac{2}{(d|G/H|)^2}.$$
Combining this bound with (\ref{eqn:ubound}), we get
$$|\rho_W(G)| = |G/H| \geq \sqrt{\frac{2}{d^2 \lambda_2}} \gtrsim \frac{\diam(\mathrm{Cay}(G;S))}{c_G^{O(1)}}.$$
The desired bound now follows using the fact that
$\diam (\mathrm{Cay}(G;S)) \geq |G|^{1/\log_2 c_G}$.
\end{proof}

\begin{cor}\label{cor:Zhom}
Under the assumptions of Theorem \ref{thm:grouprep}, there exists a normal subgroup
$N$ with $[G : N] \leq \alpha$ such that $N$ has $\mathbb Z_M$ as a homomorphic image,
where $M \gtrsim |G|^{\delta}$ and $\delta = \delta(c_G)$ and $\alpha = \alpha(c_G)$
depend only on the doubling constant of $G$.
\end{cor}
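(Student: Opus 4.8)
The plan is to combine Theorem \ref{thm:grouprep} with two classical structure theorems for finite linear groups: Jordan's theorem and the fact that a finite subgroup of $GL(W)$ which is not virtually abelian still has a large abelian quotient after passing to a bounded-index subgroup. Concretely, set $k = \dim W \leq \exp(O(\log c_G)^2)$ and let $\Gamma = \rho_W(G) \leq GL(W)$, so that $|\Gamma| \gtrsim |G|^{1/\log_2 c_G}/c_G^{O(1)}$ by Theorem \ref{thm:grouprep}(2). By Jordan's theorem, $\Gamma$ contains an abelian normal subgroup $A \trianglelefteq \Gamma$ with $[\Gamma : A] \leq j(k)$, where $j(k) \leq O(k)^{k^2}$ is an explicit Jordan bound. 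Pulling back along $\rho_W$, the subgroup $N = \rho_W^{-1}(A) \trianglelefteq G$ is normal in $G$ with $[G : N] = [\Gamma : A] \leq j(k) =: \alpha(c_G)$, and this gives the desired bounded index.

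Next I would produce the homomorphism onto a large cyclic group. The image $\rho_W(N) = A$ is a finite abelian group, so by the structure theorem for finite abelian groups it is a direct product of cyclic groups; since $A$ can be generated by at most $k$ elements (it sits inside $GL_k$), one of its cyclic factors $\mathbb Z_M$ satisfies $M \geq |A|^{1/k}$. Composing $\rho_W|_N : N \to A$ with the projection of $A$ onto this cyclic factor yields a homomorphism of $N$ onto $\mathbb Z_M$. It remains to bound $M$ from below: $|A| = |\Gamma|/[\Gamma:A] \geq |\Gamma|/\alpha(c_G) \gtrsim |G|^{1/\log_2 c_G}/(c_G^{O(1)}\alpha(c_G))$, hence
$$
M \geq |A|^{1/k} \gtrsim \left(|G|^{1/\log_2 c_G}\right)^{1/k} \cdot (c_G^{O(1)} \alpha(c_G))^{-1/k} \gtrsim |G|^{1/(k \log_2 c_G)},
$$
where the last step absorbs the constant factor $(c_G^{O(1)}\alpha(c_G))^{-1/k}$ into the $\gtrsim$ since it depends only on $c_G$. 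Thus $\delta = \delta(c_G) = 1/(k\log_2 c_G)$ and $\alpha = \alpha(c_G) = j(k)$, both depending only on $c_G$ through $k$, as required.

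The only genuinely delicate point is verifying that the abelian subgroup $A$ from Jordan's theorem really does have a cyclic quotient of size at least $|A|^{1/k}$ rather than merely $\exp(\Omega(\log|A|/k))$ with an unhelpful hidden constant — but this is immediate from the invariant-factor decomposition $A \cong \mathbb Z_{d_1} \times \cdots \times \mathbb Z_{d_r}$ with $r \leq k$ and $d_1 \mid d_2 \mid \cdots \mid d_r$, since then $d_r \geq (d_1 \cdots d_r)^{1/r} = |A|^{1/r} \geq |A|^{1/k}$. A secondary bookkeeping matter is tracking which version of the Jordan bound to cite; any of the standard effective forms (e.g.\ $O(k)^{k^2}$, or even Collins's sharper $(k+1)!$ for $k$ large) suffices for the stated asymptotics, and matches the exponent $O(k)^{k^2}$ appearing in Theorem \ref{thm:mainst}(2). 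No scale-selection or eigenvalue estimates are needed beyond what Theorem \ref{thm:grouprep} already provides.
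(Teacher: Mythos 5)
Your proposal is correct and follows essentially the same route as the paper: apply Jordan's theorem to $\rho_W(G)$ to extract a bounded-index normal abelian subgroup $A$, pull back to $N = \rho_W^{-1}(A)$, and use the fact that $A$, being an abelian subgroup of $GL(W)$ with $\dim W = k$, decomposes into at most $k$ cyclic factors, so one factor has order at least $|A|^{1/k}$. Your explicit invariant-factor justification and the use of $A$'s cyclic \emph{quotient} (rather than the paper's cyclic subgroup, which coincides with a quotient in the invariant-factor decomposition) are minor presentational refinements, not a different argument.
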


\begin{proof}
Let $\rho_W : G \to GL(W)$ be the representation guaranteed by Theorem \ref{thm:grouprep},
and put $k = \dim\, W$.  Now, $H=\rho_W(G)$ is a finite subgroup of $GL(W)$, hence by a theorem of Jordan (see \cite[36.13]{CR06}),
$H$ contains a normal abelian subgroup $A$ with $[H : A] = O(k)^{k^2}$.
Since $A$ is abelian, its members can be simultaneously diagonalized over $\mathbb C$; it follows that $A$
is a product of at most $k$ cyclic groups, hence $\mathbb Z_M \leq A$ for some $M \geq |A|^{1/k}$.
Putting $N = \rho^{-1}(A)$, we see that
$[G : N] = [H : A] = O(k)^{k^2}$, and $N$ maps homomorphically onto $\mathbb Z_M$.
\end{proof}

\remove{
\newpage

\section{Groups of polynomial growth}

Let $G$ be a finite group, and $S \subseteq G$ a symmetric generating set,
with $d=|S|$.
Let $\dist$ be the word metric on $\mathsf{Cay}(G;S)$.
Let $C, k \geq 0$ be such that $|B(R)| \leq C R^k$ for every $R \geq 0$.

\subsection{A high-dimensional Lipschitz eigenfunction}

Let $\varphi \in W_1$ be some first eigenfunction with $\|\varphi\|=1$,
enumerate $G = \{g_1, g_2, \ldots, g_n\}$.  Let $W_{\varphi} =
\mathrm{span}(g_1 \varphi, g_2 \varphi, \ldots, g_n \varphi) \subseteq L^2(G)$,
set $m = \dim(W_{\varphi})$, and suppose, without loss of generality,
$g_1 \varphi, \ldots, g_m \varphi$ are linearly independent.

\begin{lemma}
There exists a subspace $W' \subseteq W_{\varphi}$
with $\dim(W') \geq \lceil m/2\rceil$ and such
that for every $R \geq R_0$ and every $f \in W'$,
$f|_{B(R)} \neq 0$.
\end{lemma}

Define $F_0 : G \to \mathbb R^n$
by $$F_0(x) = n^{-1/2} \left(\vphantom{\bigoplus}g_1 \f(x), g_2 \f(x), \ldots, g_n \f(x)\right).$$
Observe that, for $s \in S$,
\begin{eqnarray*}
\|F_0(x)-F_0(xs)\|^2 &=& \frac{1}{n} \sum_{g \in G} |\varphi(gx)-\varphi(gxs)|^2 \\
&\leq & \frac{1}{n} \sum_{g \in G} \sum_{s \in S} |\varphi(g)-\varphi(gs)|^2 \\
&= & \frac{2d}{n} \lambda_1,
\end{eqnarray*}
hence $\|F_0\|_{\Lip} \leq \sqrt{2d \lambda_1/n}$.

Let $F(x) = F_0(x)-F_0(e)$, so that $F$ has polynomial growth in the sense that
\begin{equation}
\sum_{x \in B(R)} \|F(x)\|^2 \leq \frac{2d}{n}\lambda_1 \sum_{x \in B(R)} \dist(x,e)^2 \leq \frac{2d}{n}\lambda_1 |B(R)|R^2
\leq \frac{2d}{n}\lambda_1 CR^{k+2}.
\end{equation}

Let $\{\varphi_1, \varphi_2, \ldots, \varphi_m\}$ be an orthonormal basis
for $\mathrm{span} \left\{g_1 \varphi, \ldots, g_n \varphi \right\} \subseteq L^2(G)$.

\begin{lemma}
Vanishing is the PROBLEM
\end{lemma}
}

\subsection*{Acknowledgements}

We are grateful to Luca Trevisan for bringing the questions about
a finitary analog of Gromov's theorem to our attention.  We also thank
Bruce Kleiner for helpful bibliographical remarks.

\bibliographystyle{abbrv}
\bibliography{gromov}

\end{document}